\newtheorem{thm}{Theorem}[section]
\newtheorem{cor}[thm]{Corollary}
\newtheorem{lem}[thm]{Lemma}
\newtheorem{prop}[thm]{Proposition}
\newtheorem{rmk}[thm]{Remark}
\renewcommand{\qedsymbol}{$\blacksquare$}
\newcommand{\p} {\textnormal{\textsf{P}}}
\newcommand{\E} { \textnormal{\textsf{E}}}
\newcommand{\N} { \mathbb{N} }
\newcommand{\Z} { \mathbb{Z} }
\newcommand{\R} { \mathbb{R} }
\newcommand{\var}{\mathop{\mathsf{Var}}}
\newenvironment{pfof}[1]{\noindent{\underline{\textit{Proof of #1.}}}

\hspace{0.5em}}
	{\hfill\qed\vspace{1ex}}
\begin{document}

\title[Recurrence of some multidimensional RWRE]
{On the recurrence of some random walks in random environment}

\author{Nina Gantert}
\address{Nina Gantert, Technische Universit\"at M\"unchen, 
Lehrstuhl f\"ur Wahrscheinlichkeitstheorie, 
85748 Garching, 
Germany }
\email{gantert@ma.tum.de}

\author{Michael Kochler}
\address{Michael Kochler, Technische Universit\"at M\"unchen, 
Lehrstuhl f\"ur Wahrscheinlichkeitstheorie, 
85748 Garching, 
Germany }
\email{michael.kochler@tum.de}

\author{Fran\c{c}oise P\`ene}
\address{Fran\c{c}oise P\`ene, Universit\'e de Brest,
UMR CNRS 6205, Laboratoire de Math\'ematique de Bretagne Atlantique,
6 avenue Le Gorgeu, 29238 Brest cedex, France, supported by the french ANR project MEMEMO2 (ANR-10-BLAN-0125)}
\email{francoise.pene@univ-brest.fr}

\keywords{random walk in random environment, return probabilities, recurrence}

\begin{abstract}
\noindent 
This work is motivated by the study of some two-dimensional random walks in random environment
(RWRE) with transition probabilities independent of one coordinate
of the walk. These are non-reversible models and can not be treated by
electrical network techniques. The proof 
of the recurrence of such RWRE
needs new estimates for quenched return probabilities
of a one-dimensional recurrent RWRE.
We obtained these estimates by constructing
suitable valleys for the potential.
They imply that $k$ independent walkers in the same one-dimensional (recurrent) environment will meet in the origin infinitely often, for any $k$.
We also consider direct products of one-dimensional recurrent RWRE with another RWRE or with a RW.
We point out the that models involving one-dimensional recurrent RWRE are
more recurrent than the corresponding models 
involving simple symmetric walk.\\
\noindent
\textbf{AMS 2000 Subject Classification:} Primary 60K37, 60J10
\end{abstract}

\maketitle

\section{Introduction}
Since the early works of Solomon \cite{Sol} and Sinai \cite{Sin}
(see also \cite{Kesten86} and \cite{Gol}), one-dimensional random walks in random environment (RWRE)
have been studied by many authors. For an introduction to this model, we refer to \cite{Zei}. In the present work, we consider a 
one-dimensional RWRE $(X_n)_n$ with random environment 
given by a sequence
$\omega=(\omega_x)_{x\in\mathbb Z}$ of independent identically distributed (iid) random variables with values in $(0,1)$
defined on some probability space $(\Omega,\mathcal F,\p)$.
Let $z\in\mathbb Z$.
Given $\omega$, under $P_\omega^z$, $(X_n)_{n\ge 0}$ is a Markov
chain such that $P_\omega^z(X_0=z)=1$ and with the following 
transition probabilities
\begin{equation}\label{I-RWRE}
P_{\omega}^{z}(X_{n+1} = x+1|X_n=x) = \omega_x = 1 - P_{\omega}^{z}(X_{n+1} = x-1|X_n=x).
\end{equation}
For $i \in \Z$ we define
$ \rho_i = \rho_i(\omega):= \frac{1-\omega_i}{\omega_i}$ and we 
assume throughout the paper that
\begin{align}
& \E[ \log \rho_0]= 0 ,\ \ \ \var (\log \rho_0) > 0 \label{I-ass1},\\
& \p(\varepsilon \le \omega_0 \le 1 -\varepsilon) = 1 \text{ for some } \varepsilon \in \left(0, \tfrac12 \right)\label{I-ass2}.
\end{align}
The first part of \eqref{I-ass1} ensures that the RWRE is recurrent for $\p$-a.e. $\omega$,  its second part excludes the case of a deterministic environment. 
Such RWREs are often called ``Sinai's walk'' due to the results in \cite{Sin}.
Assumption
\eqref{I-ass2} (called uniform ellipticity) is a common technical condition in the context of RWRE. 
Our main results on the one-dimensional RWRE $(X_n)_n$ are the following. We write $P_\omega$ for $P^0_\omega$.

\begin{thm}\label{I-Rthm1}
For $0 \le \alpha < 1$ and for $\p$-a.e. $\omega$, we have
\begin{equation}
\label{I-thm1}
\sum_{n \in \N} P_{\omega}(X_{2n}=0) \cdot n^{-\alpha}= \infty.
\end{equation}
\end{thm}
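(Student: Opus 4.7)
The plan is to build, for each $N$, an environment event $G_N$ forcing $0$ to be the bottom of a valley of depth slightly exceeding $\log N$ and of width $O((\log N)^2)$; to deduce that on $G_N$ the expected local time at $0$ up to time $2N$ is $\gtrsim N/(\log N)^2$, which immediately yields divergence of the partial sums of the series along a subsequence; and finally to upgrade the resulting $\p$-positive probability of divergence to probability $1$ via shift-invariance and ergodicity.

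Write $V(x) := \sum_{i=1}^x \log\rho_i$ (with the natural extension to $x<0$); by \eqref{I-ass1}, $V$ is a centered random walk with positive variance, and Donsker's theorem rescales it to a Brownian motion on the spatial scale $(\log N)^2$. Setting $h_N := \log N + 3\log\log N$, I would define
\[
G_N := \Bigl\{\exists\, L, R \in [1, Ch_N^2]\ :\ V(x)\ge 0\ \forall\, x\in[-L,R],\ V(-L)\wedge V(R)\ge h_N\Bigr\}
\]
for a large absolute constant $C$. The Brownian analogue of $G_N$, namely that a two-sided Brownian motion stays non-negative on $[-1,1]$ and reaches level $1$ on both sides by time $1$, has positive probability, so the invariance principle gives $\p(G_N)\ge c_0>0$ uniformly in $N$.

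On $G_N$, the walk started at $0$ is essentially trapped inside $V_N := [-L, R]$ up to time $2N$: a standard Arrhenius-type estimate (see e.g.\ \cite{Zei}) gives expected exit time $\asymp e^{h_N} = N(\log N)^3$, so $P_\omega(\tau_{\mathrm{exit}} \le 2N) = O((\log N)^{-3})$. Conditionally on non-exit, the walk is reversible with stationary measure $\pi_N$ comparable, via uniform ellipticity \eqref{I-ass2}, to $e^{-V(\cdot)}/Z_N$ on $V_N$, where $Z_N = \sum_{x\in V_N} e^{-V(x)} = O((\log N)^2)$ since $V\ge 0$ on $V_N$ and $|V_N| \le 2C h_N^2$. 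Hence $\pi_N(0)\gtrsim (\log N)^{-2}$. Once the walker has mixed inside the valley (on the polynomial time scale $|V_N|^2 \ll N$), its expected local time at $0$ satisfies, by the ergodic theorem for the conditioned chain,
\[
A_N := \sum_{n=1}^N P_\omega(X_{2n}=0) \ge c\, N\pi_N(0) \ge c'\, N(\log N)^{-2}.
\]
The trivial bound $\sum_{n=1}^N n^{-\alpha}P_\omega(X_{2n}=0) \ge N^{-\alpha}A_N \gtrsim N^{1-\alpha}(\log N)^{-2}$ tends to $\infty$ for $\alpha<1$, so on $\limsup_N G_N$ the full series diverges.

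Since $\p(G_N)\ge c_0$ uniformly, $\p(\limsup_N G_N)\ge c_0$, and hence the divergence event $B := \{\omega: \sum_n n^{-\alpha}P_\omega(X_{2n}=0)=\infty\}$ satisfies $\p(B)\ge c_0 > 0$. Moreover, $B$ is $\p$-almost invariant under the environment shift $\omega_\cdot \mapsto \omega_{\cdot+1}$: the Markov property together with \eqref{I-ass2} gives $P_\omega^0(X_{2n+2}=0)\ge \varepsilon^2 P_\omega^1(X_{2n}=1)$ (and symmetrically), while the walk under the shifted environment starting at $0$ has the law of the original walk starting at $1$; the two series are therefore equivalent up to multiplicative constants. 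Ergodicity of $\p$ forces $\p(B)\in\{0,1\}$, so $\p(B)=1$. The main technical obstacle is the quenched lower bound on $A_N$: one must choose the depth slightly above $\log N$ to guarantee non-exit, the width carefully so as to bound $Z_N$, and one must verify that the walker has effectively mixed inside the valley by time $2N$ --- this is precisely the ``suitable valleys'' construction announced in the abstract.
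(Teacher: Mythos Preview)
The central gap is the claim that $\p(G_N)\ge c_0>0$ uniformly in $N$. Your ``Brownian analogue'' --- a two-sided Brownian motion started at $0$ staying nonnegative on $[-1,1]$ --- has probability \emph{zero}: by the law of the iterated logarithm, $B$ takes negative values in every neighbourhood of the origin almost surely. The discrete picture is no better: for a centered random walk with bounded increments, the probability of staying nonnegative for the first $n$ steps decays like $c/\sqrt{n}$ (Sparre--Andersen), and the probability of reaching level $h_N$ before going below $0$ is of order $1/h_N$. Since your event $G_N$ requires this on both sides independently, $\p(G_N)\asymp h_N^{-2}=(\log N)^{-2}\to 0$. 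Consequently the inequality $\p(\limsup_N G_N)\ge c_0$ fails, and the ergodicity upgrade (which is otherwise a correct and pleasant argument) has nothing to work with.

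The paper repairs exactly this point: its valley event $\Gamma(L,\delta)$ does \emph{not} ask $0$ to be the exact bottom; it allows $V$ to dip to $-\delta L$ on each side of the origin (the conditions $R_1^\pm(L)\le\delta L$), and controls the secondary barrier back to $0$ via $R_2^\pm(L)\le\delta L$. This event has uniformly positive probability by Brownian scaling. Allowing a nontrivial bottom forces a more delicate lower bound on $P_\omega(X_{2n}=0)$ than your $\pi_N(0)$ heuristic: the paper obtains it directly (Proposition~\ref{I-prop1}) via reversibility and comparison with the chain reflected at the valley walls, rather than through a mixing-time argument. Finally, instead of ergodicity, the paper shows $\{\omega\in\Gamma(L,\delta)\ \text{i.o.}\}$ has full probability by a block construction along a superexponential sequence $L_k$, using the KMT coupling to transfer to Brownian motion (Proposition~\ref{I-lemma2}). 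Your ergodicity route would also close the argument once you relax $G_N$ to permit $V\ge -\delta h_N$ on the valley, but then your estimate $Z_N=O((\log N)^2)$ and hence $\pi_N(0)\gtrsim(\log N)^{-2}$ must be revisited: the normalizing constant picks up a factor $e^{\delta h_N}\approx N^\delta$, and the mixing time inside the valley can be as large as $e^{2\delta h_N}\approx N^{2\delta}$ because of sub-barriers --- still workable for small $\delta$, but requiring the kind of care the paper's Proposition~\ref{I-prop1} supplies.
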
\vspace{12pt}

\begin{thm}\label{I-Rthm2}
For all $\alpha > 0$ and for $\p$-a.e. $\omega$, we have
\begin{equation}
\label{I-thm2}
\sum_{n \in \N} \Big(P_{\omega}(X_{2n}=0)\Big)^{\alpha} = \infty.
\end{equation}
In particular, $d$ independent particles performing recurrent RWRE in the same environment (and starting from the origin) are meeting in the origin infinitely often, almost surely.
\end{thm}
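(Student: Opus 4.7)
The plan is to deduce Theorem~\ref{I-Rthm2} directly from Theorem~\ref{I-Rthm1} by an elementary analytic argument, and then to derive the $d$-walkers consequence from the standard recurrence criterion for Markov chains. Throughout, fix $\omega$ in the full-probability set on which Theorem~\ref{I-Rthm1} holds, and abbreviate $a_n := P_\omega(X_{2n}=0) \in [0,1]$.

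For $0 < \alpha \le 1$ the pointwise bound $a_n^\alpha \ge a_n$ (valid because $a_n \le 1$) yields $\sum_n a_n^\alpha \ge \sum_n a_n = \infty$, the last equality being the $\alpha = 0$ case of Theorem~\ref{I-Rthm1}. For $\alpha > 1$ I argue by contradiction: if $\sum_n a_n^\alpha < \infty$, pick any $\gamma$ in the nonempty interval $(1-1/\alpha,\,1)$ and apply Hölder's inequality with conjugate exponents $p=\alpha$ and $q=\alpha/(\alpha-1)$ to get
\[
\sum_n a_n\, n^{-\gamma} \ \le\ \Big(\sum_n a_n^\alpha\Big)^{1/\alpha} \Big(\sum_n n^{-\gamma\alpha/(\alpha-1)}\Big)^{(\alpha-1)/\alpha}.
\]
The choice $\gamma > 1-1/\alpha$ makes $\gamma\alpha/(\alpha-1) > 1$, so the right-hand side is finite; this contradicts Theorem~\ref{I-Rthm1} applied with parameter $\gamma \in (0,1)$.

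For the consequence about $d$ independent walkers, the key observation is that under $P_\omega^{\otimes d}$ the vector-valued process $(X_n^{(1)},\ldots,X_n^{(d)})$ is a Markov chain on $\Z^d$ whose expected number of returns to the origin starting from the origin equals $\sum_n (P_\omega(X_{2n}=0))^d$. By the first part of the theorem applied with $\alpha = d$, this sum is infinite, so the origin is a recurrent state of the product chain and hence visited infinitely often $P_\omega^{\otimes d}$-a.s. Since this conclusion holds for $\p$-a.e.\ $\omega$, the asserted almost sure statement follows.

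There is no substantial obstacle in this approach: the entire difficulty is absorbed into Theorem~\ref{I-Rthm1}, and what remains is the Hölder computation together with the textbook fact that infinite expected returns imply recurrence, and hence infinitely many returns, for a Markov chain.
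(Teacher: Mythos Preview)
Your proof is correct, and the route is genuinely different from the paper's. The paper does not deduce Theorem~\ref{I-Rthm2} from Theorem~\ref{I-Rthm1}; instead it goes back to the valley construction (Propositions~\ref{I-prop1} and~\ref{I-lemma2}) and, for a suitably small $\delta$ depending on $\alpha$, sums the pointwise lower bound $P_\omega(X_{2n}=0)\ge C e^{-3\delta L}$ over the range $e^{3\delta L}\le n\le e^{(1-2\delta)L}$, exactly as it does for Theorem~\ref{I-Rthm1}. Your argument instead treats Theorem~\ref{I-Rthm1} as a black box: the case $\alpha\le 1$ is trivial monotonicity, and for $\alpha>1$ the H\"older/contrapositive step is clean and correct (the choice $\gamma>1-1/\alpha$ is exactly what makes the $\ell^{q}$ factor converge). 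The upshot is that your approach exposes a logical implication the paper leaves implicit, namely that \eqref{I-thm2} is a formal consequence of \eqref{I-thm1}; the paper's approach, by contrast, makes the two theorems parallel consequences of the same quantitative estimate, which would matter if one wanted explicit growth rates for the partial sums, but here neither version extracts such information. The $d$-walkers consequence is handled the same way in spirit by both: infinite expected returns for the product chain at the origin, hence recurrence of that state.
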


\begin{rmk} 
It was shown in \cite{Gal2013} that $d$ independent particles in the same environment meet infinitely often, and 
the tail of the meeting time was investigated. We show here that the $d$ particles even meet infinitely often in the origin.
\end{rmk}

For the next statement, we consider $d$ independent environments.

\begin{cor}\label{I-Rthm3}
For $d \in \N$, consider $d$ 
i.i.d. random environments $\omega^{(1)}, \omega^{(2)}, \ldots, \omega^{(d)}$ 
fulfilling \eqref{I-ass1} and \eqref{I-ass2}. Then, 
for $\p^{\otimes d}$-a.e. $(\omega^{(1)}, \omega^{(2)}, \ldots, \omega^{(d)})$, we have
\begin{equation}
\label{I-eqcor2}
\sum_{n \in \N} \prod_{k=1}^{d} P_{\omega^{(k)}}(X_{2n}=0) = \infty.
\end{equation}
In particular, $d$ independent particles performing recurrent RWRE in i.i.d. environments 
(and starting from the origin) 
are meeting in the origin infinitely often, almost surely.
\end{cor}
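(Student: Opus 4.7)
\begin{pfof}{Corollary \ref{I-Rthm3} (proposal)}

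The plan is to run the argument behind Theorem \ref{I-Rthm2} in parallel across the $d$ independent environments, exploiting that they are independent. Write $p_n^{(k)}:=P_{\omega^{(k)}}(X_{2n}=0)$ and $S:=\sum_n \prod_{k=1}^d p_n^{(k)}$; the goal is to show $S=\infty$ $\p^{\otimes d}$-a.s.

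The proof of Theorem \ref{I-Rthm2} is expected to produce, at each scale $n$, an event $A_n\subset\Omega$ measurable with respect to the restriction of $\omega$ to a window $W_n\subset\Z$ about the origin of size $O((\log n)^2)$, together with deterministic quantities $c_n,q_n>0$ satisfying
\begin{equation*}
  P_\omega(X_{2n}=0)\ge c_n \text{ on } A_n, \qquad \p(A_n)\ge q_n,
\end{equation*}
with $c_n$ and $q_n$ controlled well enough that $\sum_n q_n c_n^\alpha=\infty$ for every $\alpha>0$. The event $A_n$ should encode the presence of a suitable Sinai valley of the potential around the origin at the relevant logarithmic scale, as in the construction alluded to in the abstract. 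Applied in each environment, this gives events $A_n^{(k)}$; independence of the $\omega^{(k)}$'s yields $\p^{\otimes d}(\bigcap_{k=1}^d A_n^{(k)})=q_n^d$, and on the intersection $\prod_{k=1}^d p_n^{(k)}\ge c_n^d$.

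Next, extract a rapidly growing subsequence $(n_j)$ such that the windows $W_{n_j}$ are pairwise disjoint subsets of $\Z$; this is possible since $|W_n|=O((\log n)^2)$. For each fixed $k$, the events $A_{n_j}^{(k)}$ are then independent in $j$, since they depend on disjoint blocks of the i.i.d.\ environment $\omega^{(k)}$; combined with the independence across environments, the full intersections $\bigcap_{k=1}^d A_{n_j}^{(k)}$ are independent in $j$, with probabilities $q_{n_j}^d$. If the subsequence is chosen so that $\sum_j q_{n_j}^d=\infty$, the second Borel--Cantelli lemma gives that $\omega^{(k)}\in A_{n_j}^{(k)}$ for every $k$ and for infinitely many $j$, almost surely. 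On this event, $S\ge\sum_j c_{n_j}^d\,\I_{\bigcap_k A_{n_j}^{(k)}}=\infty$.

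The main difficulty is the quantitative, localized extraction from the proof of Theorem \ref{I-Rthm2}: one must go beyond the bare statement $\sum p_n^\alpha=\infty$ and produce the triple $(A_n,c_n,q_n)$ with explicit control, ensuring that $q_n$ does not decay too fast (e.g.\ is bounded below by an inverse power of $\log n$) so that the divergence $\sum_j q_{n_j}^d=\infty$ persists for \emph{every} $d$ after passing to a disjoint-window subsequence. The meeting-at-origin consequence then follows from $S=\infty$ a.s.\ by the standard Green's function recurrence criterion applied to the reversible product Markov chain $(X^{(1)},\dots,X^{(d)})$ on $\Z^d$: its quenched Green's function at the origin equals $S$, and its divergence forces infinitely many common returns to $0$.

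\end{pfof}
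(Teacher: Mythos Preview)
Your proposal has a genuine gap at the step where you ``extract a rapidly growing subsequence $(n_j)$ such that the windows $W_{n_j}$ are pairwise disjoint subsets of $\Z$''. This is impossible: any event $A_n$ that forces $P_\omega(X_{2n}=0)\ge c_n$ must constrain the environment in a neighbourhood of the origin (the walk starts and returns there), so every $W_n$ contains $0$. The windows are therefore \emph{nested}, not disjoint, and for fixed $k$ the events $A_{n_j}^{(k)}$ are not independent in $j$. Your Borel--Cantelli step collapses.

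The paper handles exactly this obstruction by a conditional argument on nested scales, after passing to the Brownian motion via the KMT coupling (Theorem \ref{I-Komlos}). Two features make it work. First, by Brownian scaling the probability of the valley event $\Gamma(L,\delta)$ (more precisely, of its Brownian surrogate $D^+(L,\delta)\cap D^-(L,\delta)$) is a \emph{positive constant independent of $L$}, not merely polylogarithmic in $n$. Second, instead of independence one uses the Markov property of $B$: conditionally on $\mathcal F_n=\sigma\big((B(t))_{|t|\le (L_{n-1})^2}\big)$, the increments on the annulus produce the next-scale valley with probability bounded below by that same constant; see the chain \eqref{I-eq25}--\eqref{I-eq26} in the proof of Proposition \ref{I-lemma2}. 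This yields geometric decay of the probability of missing every scale. For the corollary one runs the identical conditional argument simultaneously for the $d$ independent Brownian couplings, replacing $\p\big(D^+(1,\delta)\cap D^-(1,\delta)\big)$ by its $d$-th power, which is still positive; this gives \eqref{I-eq16.2}. Once all $d$ environments lie in $\Gamma(L,\delta)$ for some large $L$, Proposition \ref{I-prop1} bounds each factor $p_n^{(k)}\ge C e^{-3\delta L}$ uniformly over $e^{3\delta L}\le n\le e^{(1-2\delta)L}$, and summing over that stretch gives $S\ge C^d e^{(1-2\delta-3\delta d)L}\to\infty$ for $\delta<\tfrac{1}{2+3d}$. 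Your final paragraph (Green's function $\Rightarrow$ recurrence of the product chain) is fine.
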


We point out that a proof of Corollary \ref{I-Rthm3} can also be found in \cite{Zei} after Lemma A.2. The proof there uses the Nash-Williams inequality in the context of electrical networks.

In \cite{CP}, Comets and Popov also consider the return probabilities of the one-dimensional recurrent RWRE on $\Z$. In contrast to our setting, they consider the corresponding jump process in continuous time $(\xi^z_t)_{t \ge 0}$ started at $z \in \Z$ and with jump rates $(\omega_x^+,\omega_x^-)_{x \in \Z}$ to the right and left neighbouring sites. One advantage of this process in continuous time is that it is not periodic as the RWRE in discrete time. 
They show the following (under two conditions on the environment $(\omega_x^+,\omega_x^-)_{x \in \Z}$):

\textbf{Theorem} (cf.\ Corollary 2.1 and Theorem 2.2 in \cite{CP}) \textit{We have}
$
\frac{\log P_{\omega}(\xi_t^0 = 0)}{\log t} \xrightarrow{t \to \infty} - \widehat{a}_e$ in distribution where $\widehat{a}_e$ has the density $f$
given by $f(z)=2 - z - (z+2) \cdot e^{-2z}$ if $z\in(0,1)$
and
$f(z)=([e^2-1] \cdot z - 2 ) \cdot e^{-2z}$ if $z  \ge 1$.\medskip

Since we can embed the recurrent RWRE $(X_n)_{n \in \N_0}$ in discrete-time into the corresponding jump process in continuous time, we can expect the return probabilities to behave similarly as in the continuous setting. In particular, for $\p$-a.e.\ environment $\omega$, we expect 
$$
P_{\omega}(X_{2n} = 0) =: n^{-a(\omega,n)} 
\quad\mbox{with}\quad
 \liminf_{n \to \infty} a(\omega,n) = 0,\ \limsup_{n \to \infty} a(\omega,n) = \infty.
$$
Theorem \ref{I-Rthm1}, \ref{I-Rthm2} and Corollary \ref{I-Rthm3}
allow us to establish the recurrence of the multidimensional
RWRE $(M_n)_n$ in the cases (I)-(III) below. Except model (I)
(which is the direct products of $(X_n)_n$ with a RW), the models
considered here are 2-dimensional RWRE with transition probabilities independent of 
the vertical position of the walk. 

Let $\delta\in(0,1)$.
We establish recurrence of the RWRE
$(M_n)_n$ on $\mathbb Z^2$ in the three following cases:
\begin{itemize}
\item[(I)] $d=2$ and $(M_n)_n$ is the direct product of the Sinai walk 
$(X_n)_n$ and of some recurrent random walk on $\mathbb Z$;
more precisely
$$P_\omega(M_{n+1}=(x+1,y+z)|M_n=(x,y))=\omega_x\cdot\nu(\{z\})$$
$$\mbox{and}\ \ \ P_\omega(M_{n+1}=(x-1,y+z)|M_n=(x,y))=
    (1 -\omega_x) \cdot \nu(\{z\}),$$
where $\nu$ is a distribution on $\mathbb Z$ (with zero expectation)  belonging to the
domain of attraction of an $\alpha$-stable random variable with $\alpha \in (1,2]$.
\item[(II)] $d=2$ and $(M_n)_n$ either moves horizontally with respect to the Sinai walk
(with probability $\delta$) or moves vertically with respect to some
recurrent random walk (with probability $1-\delta$):
$$P_\omega(M_{n+1}=(x+1,y)|M_n=(x,y))=\delta\omega_x=\delta - 
P_\omega(M_{n+1}=(x-1,y)|M_n=(x,y)),$$
$$P_\omega(M_{n+1}=(x,y+z)|M_n=(x,y))=(1-\delta)\cdot\nu(z), $$
where $\nu$ is a probability distribution on $\mathbb Z$
(with zero expectation) belonging to the
domain of attraction of an $\alpha$-stable distribution  with $\alpha \in (1,2]$.
\item[(III)] An odd-even oriented model: $d=2$ and $(M_n)_n$ either moves horizontally with respect to
Sinai's walk (with probability $\delta$) or moves vertically (with  probability $1-\delta$) with respect to 
$\nu$
if the first coordinate of the current position of the walk is even and to $\tilde\nu:=\nu(-\cdot)$ 
otherwise; i.e.
$$P_\omega(M_{n+1}=(x+1,y)|M_n=(x,y))=\delta\omega_x=\delta - 
P_\omega(M_{n+1}=(x-1,y)|M_n=(x,y)),$$
$$P_\omega(M_{n+1}=(x,y+z)|M_n=(x,y))=(1-\delta)\nu((-1)^xz), $$
where $\nu$ is a probability distribution on $\mathbb Z$
(admitting a first moment) such that $\nu(-\cdot)*\nu$ belongs to the
domain of attraction of a stable distribution.
\end{itemize}
\begin{figure}[h] 
\begin{minipage}[t]{0.8\textwidth}
\vspace{0pt}
\centering
\includegraphics[viewport=320 450 200 800, scale =0.6]{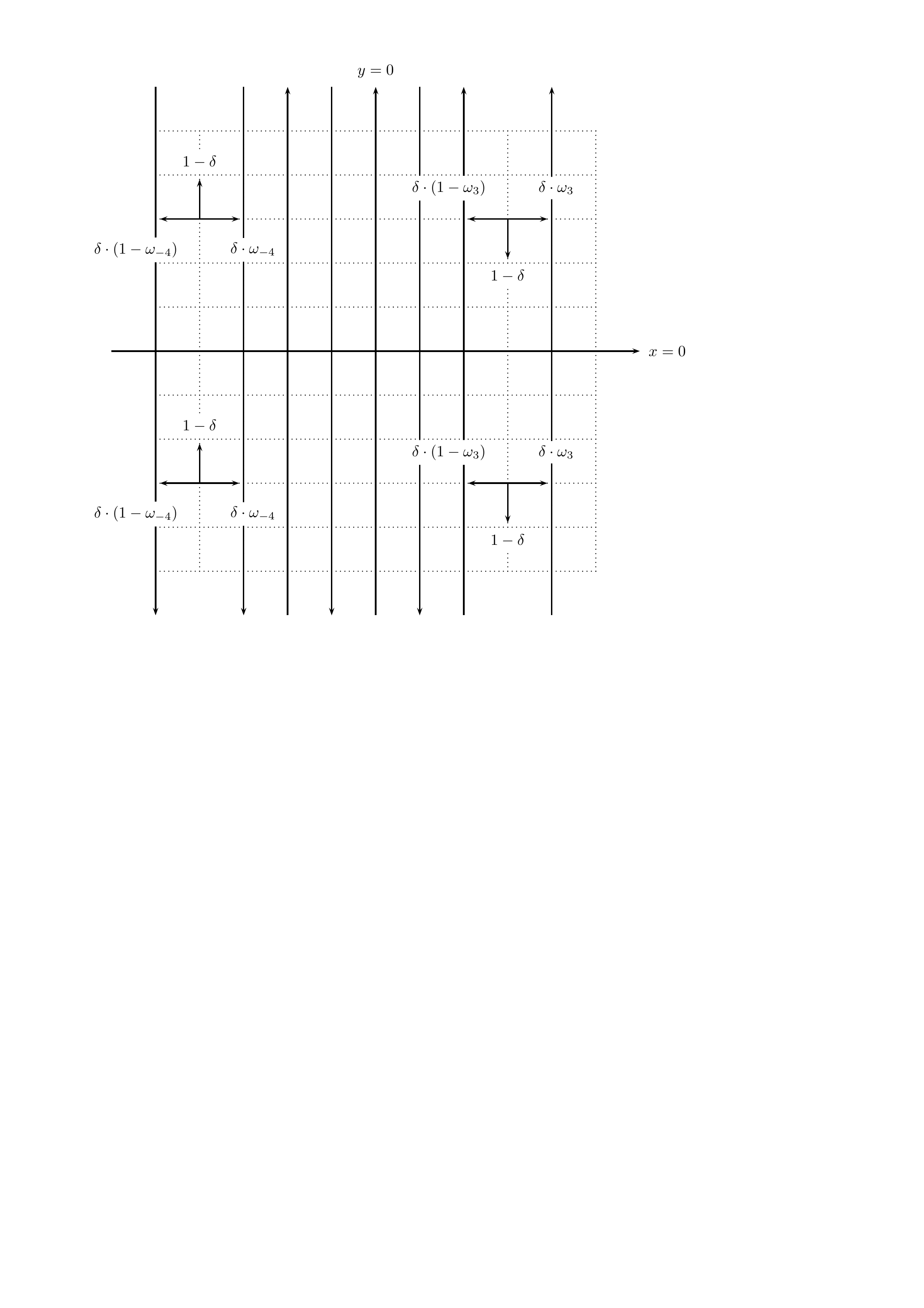}
\caption{Transition probabilities in case (III) in the particular case
where $\nu=\delta_1$. This is an example of an oriented RWRE. 
Every even vertical line is oriented upward
and every odd vertical line is oriented downward.}
\end{minipage}
\end{figure}

If $\omega_x$ is replaced by $1/2$ (i.e. if we replace
Sinai's walk by the simple symmetric walk), the walks 
given in (I)-(III) are transient when $\nu$ is in the domain of attraction of a $\beta$-stable distribution with $\beta<2$. Hence, in this study, the Sinai's walk
gives rise to more recurrent models than the simple symmetric random walk does.

The structure of our paper is the following: In Section \ref{I-sec1.2}, we introduce the potential of the one-dimensional RWRE 
and we recall some known results. Section \ref{I-sec1.5} contains the proofs of our main results for one-dimensional RWRE. In Section \ref{I-sec1.6}, we state our recurrence results for multidimensional RWRE
involving the RWRE $(X_n)_n$ (models (I)-(III)) and we compare our results with the case 
when $(X_n)_n$ is replaced by a simple random walk. 
\section{Preliminaries} \label{I-sec1.2}
As usual, we use $P_{\omega}^{o}$ instead of $P_{\omega}^{0}$ and will even drop the superscript $o$ where no confusion is to be expected. We can now define the potential $V$ as
\begin{align} \label{I-RWRE.a}
V(x):= \begin{cases} \sum\limits_{i=1}^x \log \rho_i & \text{for }x=1,2,\ldots \\ 0 & \text{for }x=0 \\ \sum\limits_{i=x+1}^{0} -\log \rho_i & \text{for }x=-1,-2,\ldots\ . \end{cases}
\end{align}
Note that $V(x)$ is a sum of iid random variables which are centered and whose absolute value is bounded 
due to assumptions \eqref{I-ass1} and \eqref{I-ass2}. One of the crucial facts for the RWRE is that, for fixed $\omega$, the random walk is a reversible Markov chain and can therefore be described as an electrical network. The conductances are given by $C_{(x,x+1)}= e^{-V(x)}$
and the stationary reversible measure which is unique up to multiplication by a constant is given by
\begin{equation} \label{I-eq1.1}
\mu_{\omega}(x)= e^{-V(x)} + e^{-V(x-1)} 
\end{equation}
The reversibility means that, for all $n \in \N_0$ and $x,y \in \Z$, we have
\begin{equation}
\label{I-eq1}
\mu_{\omega}(x) \cdot P^x_{\omega}(X_n = y) = \mu_{\omega}(y) \cdot P^y_{\omega}(X_n = x).
\end{equation}
For the random time of the first arrival in $x$
\begin{equation}\label{I-def1}
\tau(x):= \inf \{n \ge 0:\ X_n=x\},
\end{equation}
the interpretation of the RWRE $(X_n)_n$ as an electrical network helps us to compute the following probability for $x < y < z$ (for a proof see for example formula (2.1.4) in \cite{Zei}):
\begin{equation}
\label{I-prel1}
P^y_{\omega}(\tau(z) < \tau(x)) = \frac{\sum\limits_{j=x}^{y-1} e^{V(j)}}{\sum\limits_{j=x}^{z-1} e^{V(j)}}\, .
\end{equation}
Further (cf.\ (2.4) and (2.5) in \cite{SZ} and Lemma 7 in \cite{Gol}), we have for $k \in \N$ and $y < z$
\begin{align}
\label{I-prel2}
& P^y_{\omega}(\tau(z) < k) \le k \cdot \exp \left( - \max_{y \le i < z} \big[V(z-1)-V(i) \big] \right) 
\intertext{and similarly for $x < y$}
\label{I-prel3}
& P^y_{\omega}(\tau(x) < k) \le k \cdot \exp \left( - \max_{x < i \le y} \big[V(x+1)-V(i) \big] \right).
\end{align}
To get bounds for large values of $\tau(\cdot)$, we can use that for $x < y < z$ we have (cf.\ Lemma 2.1 in \cite{SZ})
\begin{equation}
\label{I-prel4}
E_{\omega}^y[\tau(z) \cdot \mathbf{1}_{\{\tau(z) < \tau(x)\}}] \le (z-x)^2 \cdot \exp \left( \max_{x\le i \le j \le z} \big(V(j) - V(i)\big) \right).
\end{equation} 
Further, the Koml{\'o}s-Major-Tusn{\'a}dy strong approximation theorem (cf.\ Theorem 1 in \cite{KMT}, see also formula (2) in \cite{CP}) will help us to compare the shape of the potential with the path of a two-sided Brownian motion:\\

\begin{thm} \label{I-Komlos}
In a possibly enlarged probability space, there exists a version of our environment process $\omega$ and a two-sided Brownian motion $(B(t))_{t \in \R}$ with diffusion constant $\sigma:=~(\var(\log \rho_0))^{\frac12}$ (i.e.\ $Var(B(t))=\sigma^2 |t|$) such that for some $K>0$ we have
\begin{equation}
\label{I-approx}
\p \left( \limsup_{x \to \pm \infty} \frac{|V(x)-B(x)|}{\log |x|} \le K \right) =1.
\end{equation} 
\end{thm}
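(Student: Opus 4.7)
The plan is to reduce the two-sided statement to the classical one-sided KMT strong approximation theorem applied to the sequence $(\log\rho_i)_{i\in\Z}$. First I would check that this sequence satisfies the hypotheses of KMT: by \eqref{I-ass1} the random variables $\log\rho_i$ are iid with mean zero and strictly positive variance $\sigma^2$, and by \eqref{I-ass2} they are uniformly bounded in absolute value by $\log\bigl((1-\varepsilon)/\varepsilon\bigr)$. In particular they have a finite moment generating function in a full neighborhood of $0$, which is far more than required by the KMT Theorem 1 of \cite{KMT}.

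Next I would apply the one-sided KMT theorem to the partial sums $S_n:=\sum_{i=1}^n \log\rho_i=V(n)$. On a suitable enlarged probability space this yields a standard Brownian motion $W^+$ and a version of $(\log\rho_i)_{i\ge 1}$ such that $|V(n)-\sigma W^+(n)|=O(\log n)$ almost surely, for some deterministic constant $K^+$. To handle the left half-line, I would apply the same theorem to the independent iid sequence $(-\log\rho_{-i+1})_{i\ge 1}$, whose partial sums coincide with $V(-n)$ for $n\ge 1$; this provides an independent Brownian motion $W^-$ and a constant $K^-$ such that $|V(-n)-\sigma W^-(n)|=O(\log n)$ a.s. Concatenating $\sigma W^+$ and $\sigma W^-$ in the natural way produces a two-sided Brownian motion $B$ with diffusion constant $\sigma$, defined on the enlarged space together with a version of $\omega$, and the bound \eqref{I-approx} holds with $K:=\max(K^+,K^-)$.

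The last point to handle is that \eqref{I-approx} is stated for the real variable $x$, but $V$ is only defined on $\Z$; I would simply note that one can extend $V$ to $\R$ by linear interpolation (or just interpret $V(x)$ as $V(\lfloor x\rfloor)$ in the limsup), so that the difference with $B$ between consecutive integers is controlled by the oscillation of $B$ on intervals of length one, which is almost surely $o(\log|x|)$ by the modulus of continuity of Brownian motion. The only genuine work is invoking KMT; everything else is bookkeeping, and the main subtlety is just ensuring the two halves are glued together with independent Brownian increments so as to obtain a bona fide two-sided Brownian motion.
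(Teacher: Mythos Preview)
The paper does not supply a proof of this statement: it appears in the Preliminaries section and is simply quoted as a known result, with a reference to Theorem~1 in \cite{KMT} and to formula~(2) in \cite{CP} for the two-sided formulation. Your reduction to two independent applications of the one-sided KMT theorem (one for $(\log\rho_i)_{i\ge 1}$ and one for $(-\log\rho_{-i+1})_{i\ge 1}$, glued via independent Brownian motions) is exactly the standard derivation of the two-sided version and is correct; since the paper treats the result as a black box, there is nothing further to compare.
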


\section{Dimension 1~: proofs of Theorem \ref{I-Rthm1}, \ref{I-Rthm2} and Corollary \ref{I-Rthm3}} \label{I-sec1.5}
For $L \in \N$ and \mbox{$0 < \delta < 1$},
we introduce the set $\Gamma(L,\delta)$ of environments defined by
$$
\Gamma(L,\delta):=\{R^{\pm}_1(L) \le \delta L,\, R^{\pm}_2(L) \le \delta L,\, T^{\pm}(L) \le L^2\} \vphantom{\inf_{0 \le k \le T^{\pm}(L)}},$$
where
\begin{align*}
& T^{+}(L):= \inf \{z \ge 0:\ V(z) - \min_{0 \le  y \le z } V(y) \ge L\} \vphantom{\inf_{0 \le y \le T^{+}(L)}},\\
& T^{-}(L):= \sup \{z \le 0:\ V(z) - \min_{n \le y \le z} V(y) \ge L\} \vphantom{\inf_{0 \le y \le T^{+}(L)}},\\
& R^{+}_1(L):= - \min_{0 \le y \le T^{+}(L)} V(y),\ \ 
 R^{-}_1(L):= - \min_{T^{-}(L)\le y \le 0} V(y), \\
& T^{+}_b(L):= \inf \{z \ge 0:\ V(z) = - R^{+}_1(L)\}  \vphantom{\inf_{0 \le y \le T^{+}(L)}},\ \ 
 T^{-}_b(L):= \sup \{z \le 0:\ V(z) = - R^{-}_1(L)\} \vphantom{\inf_{0 \le y \le T^{+}(L)}},\\
& R^{+}_2(L):= \max_{0 \le y \le T^{+}_b(L)} V(y), \ \ 
 R^{-}_2(L):= \max_{T^{-}_b(L) \le y \le 0} V(y). \vphantom{\inf_{0 \le y \le T^{+}(L)}}
\end{align*}
We then consider the valley of the potential $V$ between $T^-(L)$
and $T^+(L)$.
Here, the $+$-sign and the $-$-sign indicate whether we deal with properties of the valley on the positive or negative half-line, respectively. Note that the definition of the set $\Gamma(L,\delta)$  is compatible with the scaling of a Brownian motion in space and time.

\begin{figure}[ht] 
\vspace{45pt} 
\flushleft{
\ \ \ \ \ \ \ \ \ \ \ \ \ \ \ \includegraphics [viewport=105 450 380 675, scale=1.0]{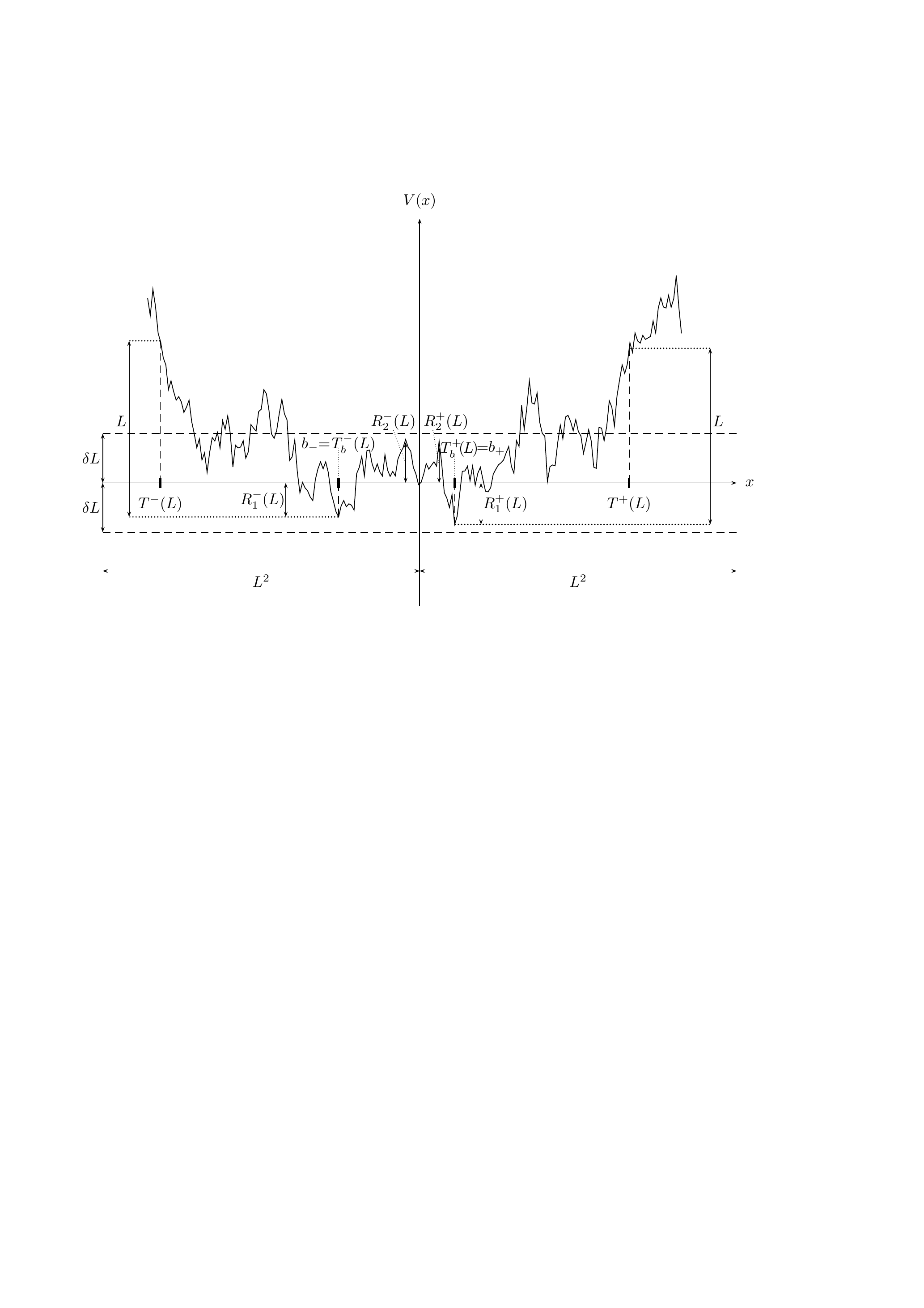} 
}
 \caption{Shape of a valley of an environment in $\Gamma(L,\delta)$} 
\label{I-figure1}
\end{figure}

\begin{rmk}
We have constructed the valleys in such a way that the return probability of the random walk to the origin is bounded from below (for even 
time points) as long as the random walk has not left the valley. If $\omega \in \Gamma(L,\delta)$, 
the random walk $(X_n)_{n \in \N_0}$ in the environment $\omega$ satisfies the following:

\begin{enumerate}
\item Since we have $V(T^{-}(L)) - V(T^{-}_b(L)) \ge L$ and $V(T^{+}(L)) - V(T^{+}_b(L)) \ge L$, the random walk $(X_n)_{n \in \N_0}$ stays within $\{T^{-}(L), T^{-}(L) + 1, \ldots, T^{+}(L)\}$ with high probability for at least $\exp((1-2\delta)L)$ steps (cf.\ \eqref{I-eq4a}).

\item Within the valley $\{T^{-}(L), T^{-}(L) + 1, \ldots, T^{+}(L)\}$, the random walk prefers to stay at positions $x$ with a small potential $V(x)$, i.e.\ at positions close to the bottom points $T^{-}_b(L)$ and $T^{+}_b(L)$.

\item The return probability for the random walk from the bottom points $T^{-}_b(L)$ and $T^{+}_b(L)$ to the origin is mainly given by the potential differences $R_2^{-}(L)+R_1^{-}(L) \le 2 \delta L$ and $R_2^{+}(L)+R_1^{+}(L) \le 2 \delta L$ respectively, i.e.\ by the height of the potential the random walk has to overcome from the bottom points back to the origin (cf.\ \eqref{I-eq2}). 
\end{enumerate}

\end{rmk}

\begin{prop} \label{I-prop1}
For every $\delta\in(0,\tfrac15)$, 
there exists $C=C(\delta)$ such that, 
for every $L$, every for $\omega \in \Gamma(L,\delta)$
and every $n$ satisfying $e^{3 \delta L} \le n \le e^{(1-2\delta)L}$, we have
\begin{equation}
\label{I-lem1}
P_{\omega}^o (X_{2n}=0) \ge C \cdot \exp(-3\delta L).
\end{equation}
\end{prop}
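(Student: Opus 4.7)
The plan is to combine reversibility of the RWRE with a Cauchy--Schwarz inequality whose sum is restricted to the valley $A := \{T^-(L), T^-(L)+1, \dots, T^+(L)\}$. The hypothesis $\omega \in \Gamma(L,\delta)$ will ensure both that the walk stays trapped in $A$ throughout the allowed time window and that the carrying capacity $\sum_{y \in A}\mu_\omega(y)$ of $A$ is not too large.

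Starting from the Markov property at time $n$ combined with the reversibility identity \eqref{I-eq1}, one gets
$$P_\omega^o(X_{2n}=0) \;=\; \sum_{y \in \Z} P_\omega^o(X_n=y)\,P_\omega^y(X_n=0) \;=\; \mu_\omega(0) \sum_{y \in \Z} \frac{P_\omega^o(X_n=y)^2}{\mu_\omega(y)}.$$
Restricting the sum to $y \in A$ and applying Cauchy--Schwarz yields
$$P_\omega^o(X_{2n}=0) \;\ge\; \mu_\omega(0)\cdot \frac{P_\omega^o(X_n \in A)^2}{\sum_{y \in A} \mu_\omega(y)}.$$
For the denominator, the definitions of $T^\pm_b(L)$ and $R_1^\pm(L)$ combined with $\omega \in \Gamma(L,\delta)$ force $V(y) \ge -\max(R_1^+(L),R_1^-(L)) \ge -\delta L$ on $A$, so by \eqref{I-eq1.1} and \eqref{I-ass2} one has $\mu_\omega(y) \le 2 e^{\delta L + K_\varepsilon}$ with $K_\varepsilon := \log((1-\varepsilon)/\varepsilon)$; together with $|A| \le 2L^2+1$ this gives $\sum_{y \in A}\mu_\omega(y) \le C_\varepsilon L^2 e^{\delta L}$.

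For the numerator, I would lower-bound $P_\omega^o(X_n \in A) \ge 1 - P_\omega^o(\tau(T^+(L)) \le n) - P_\omega^o(\tau(T^-(L)) \le n)$. The defining property that $V$ rises by at least $L$ from its local minimum on $[0,T^+(L)]$ and on $[T^-(L),0]$, fed into \eqref{I-prel2}--\eqref{I-prel3} at $i=T_b^\pm(L)$, bounds each of these two terms by $n\, e^{-L+K_\varepsilon}$; for $n \le e^{(1-2\delta)L}$ this is $\le C_\varepsilon e^{-2\delta L}$, hence $P_\omega^o(X_n \in A) \ge 1/2$ for $L$ large. Putting everything together with $\mu_\omega(0) \ge 1$ gives $P_\omega^o(X_{2n}=0) \ge c(\delta)/(L^2 e^{\delta L})$, and since $L^2 \le e^{\delta L}$ for $L$ large this is $\ge c(\delta) e^{-2\delta L} \ge c(\delta) e^{-3\delta L}$. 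The finitely many small-$L$ exceptions (where $n$ is also bounded since $n \le e^{(1-2\delta)L}$) are absorbed into $C(\delta)$ via uniform ellipticity \eqref{I-ass2}, which forces $P_\omega^o(X_{2n}=0) \ge \varepsilon^{2n}$.

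The main technical obstacle is making the exit-time estimate uniform in $\omega \in \Gamma(L,\delta)$ and carefully handling the additive correction $K_\varepsilon$ from single-step potential fluctuations. I note that the Cauchy--Schwarz argument above does not actually need the hypothesis $n \ge e^{3\delta L}$; the authors may therefore follow a related but slightly different route, namely a mixing/relaxation argument inside the valley, where $e^{3\delta L}$ arises naturally as the product of the spatial width $|A| \lesssim L^2 \lesssim e^{\delta L}$ with the relaxation factor $e^{2\delta L}$ generated by the barriers of height at most $R_1^\pm + R_2^\pm \le 2\delta L$ between the origin and the bottoms $T_b^\pm(L)$.
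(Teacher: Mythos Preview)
Your argument is correct and in fact cleaner than the paper's. The paper does \emph{not} apply Cauchy--Schwarz directly to $P_\omega^o(X_{2n}=0)=\mu_\omega(0)\sum_y P_\omega^o(X_n=y)^2/\mu_\omega(y)$; instead it routes the walk through the bottom point $b_+=T_b^+(L)$. Concretely, it writes
\[
P_\omega^o(X_{2n}=0)\;\ge\;P_\omega^o\!\big(\tau(b_+)\le \tfrac{2n}{3},\ \tau(b_+)<\tau(b_-)\big)\cdot\frac{\mu_\omega(0)}{\mu_\omega(b_+)}\cdot\widehat{\inf}_{\ell}P_\omega^o(X_\ell=b_+),
\]
bounds the first factor via Markov's inequality and \eqref{I-prel4} (this is precisely where the lower bound $n\ge e^{3\delta L}$ enters, since $E_\omega^o[\tau(b_+)\mathbf 1_{\{\tau(b_+)<\tau(b_-)\}}]\le 4L^4e^{2\delta L}$), and bounds the third factor by coupling with the walk reflected at $T^\pm(L)$ and then applying Cauchy--Schwarz to that reflected walk from $b_+$ (their Lemma~\ref{I-lem3}). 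Your single Cauchy--Schwarz step over $y\in A$ replaces all three factors at once: the trapping estimate \eqref{I-prel2}--\eqref{I-prel3} handles $P_\omega^o(X_n\in A)$ using only $n\le e^{(1-2\delta)L}$, and the bound $V\ge -\delta L$ on $A$ handles $\sum_{y\in A}\mu_\omega(y)$. As you noticed, the hypothesis $n\ge e^{3\delta L}$ is then superfluous, and you even obtain $\ge c(\delta)e^{-2\delta L}$ rather than $e^{-3\delta L}$. The paper's decomposition is closer to the localization heuristic (the walk equilibrates near the valley bottom), which presumably motivated it, but for the stated inequality your route is both shorter and slightly stronger; your final paragraph anticipates the paper's strategy accurately.
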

\begin{pfof}{Proposition \ref{I-prop1}}
The return probability to the origin for the time points of interest is mainly influenced by the shape of the ``valley'' of the environment $\omega$ between $T^-(L)$ and $T^+(L)$. For the positions of the two deepest bottom points of this valley on the positive and negative side, we write $b_\pm:=T^\pm_b(L)$ 
and we assume for the following proof that we have (cf.\ \eqref{I-def1} for the definition of $\tau(\cdot)$)
\begin{equation}
\label{I-ass4}
P^o_{\omega} \big( \tau(b_+) < \tau(b_-) \big) \ge \frac12.
\end{equation}
(Due to the symmetry of the RWRE, the proof also works in the opposite case if we switch the roles of $b_+$ and $b_-$). We have
\begin{align}
& P^o_{\omega} (X_{2n}=0) \ge P^o_{\omega} \left(X_{2n}=0,\ \tau(b_+) \le \tfrac{2n}{3},\ \tau(b_+) < \tau(b_-) \right) \nonumber \vphantom{\frac{\mu_{\omega}(0)}{\mu_{\omega}(b_+)}}\\
\ge\ & P^o_{\omega} \left(\tau(b_+) \le \tfrac{2n}{3},\ \tau(b_+) < \tau(b_-) \right) \cdot \widehat{\inf}_{\ell \in \big\{\left\lceil \tfrac{4n}{3}\right\rceil,\ldots,2n\big\}} P^{b_+}_{\omega} (X_{\ell} = 0) \nonumber \vphantom{\frac{\mu_{\omega}(0)}{\mu_{\omega}(b_+)}}\\
=\ & P^o_{\omega} \left(\tau(b_+) \le \tfrac{2n}{3},\ \tau(b_+) < \tau(b_-) \right) \cdot \frac{\mu_{\omega}(0)}{\mu_{\omega}(b_+)} \cdot \widehat{\inf}_{\ell \in \big\{\left\lceil \tfrac{4n}{3}\right\rceil,\ldots,2n\big\}} P^o_{\omega} (X_{\ell} = b_+) \label{I-eq2}
\end{align}
where we used \eqref{I-eq1} in the third step and with the
short notation
\[
\widehat{\inf}_{\ell \in \big\{\left\lceil \tfrac{4n}{3} \right\rceil,\ldots,2n\big\}} P^x_{\omega}(X_{\ell}=y)
:=\inf_{\ell \in \big\{\left\lceil \tfrac{4n}{3}\right\rceil,\ldots,2n\big\}\cap \big(2\Z+ (x+y)\big)} P^x_{\omega}(X_{\ell}=y).
\]
Let us now have a closer look at the factors in the lower bound in \eqref{I-eq2} separately:\\[10pt]
\underline{First factor in \eqref{I-eq2}:}
We can bound the first factor from below by
\begin{align*}
& P^o_{\omega} \left(\tau(b_+) \le \tfrac{2n}{3},\ \tau(b_+) < \tau(b_-) \right) \vphantom{\exp \left( \max_{b_- \le i \le j \le b_+} \big(V(j) - V(i)\big)  \right)}\\
=\ & 1 - P^o_{\omega} \left(\tau(b_+) > \tfrac{2n}{3},\ \tau(b_+) < \tau(b_-) \right) - P^o_{\omega} \big( \tau(b_+) \ge \tau(b_-) \big)\vphantom{\exp \left( \max_{b_- \le i \le j \le b_+} \big(V(j) - V(i)\big)  \right)}\\
\ge\ & 1 - \tfrac{3}{2n}\cdot E^o_{\omega} \left[\tau(b_+) \cdot \mathbf{1}_{\{\tau(b_+) < \tau(b_-)\}} \right] - P^o_{\omega} \big( \tau(b_+) \ge \tau(b_-) \big)\vphantom{\exp \left( \max_{b_- \le i \le j \le b_+} \big(V(j) - V(i)\big)  \right)}\\
\ge\ & 1 - \tfrac{3}{2n}\cdot (b_+ - b_-)^2\cdot \exp \left( \max_{b_- \le i \le j \le b_+} \big(V(j) - V(i)\big)  \right) - \frac12 \ ,
\end{align*}
where we used \eqref{I-prel4} and assumption \eqref{I-ass4} for the last step.
Therefore, we get for \mbox{$\omega \in \Gamma(L,\delta)$} and $\exp\left(3 \delta L \right) \le n$ that
\begin{align}
&  P^o_{\omega} \left(\tau(b_+) \le \tfrac{2n}{3},\ \tau(b_+) < \tau(b_-) \right)  \ge \frac12 - \frac{3 \cdot 4 \cdot L^4}{2\cdot \exp(3\delta L)} \cdot \exp(2\delta L)= \frac12 - 6 \cdot L^4 \cdot \exp(-\delta L). \label{I-eq5}
\end{align}
\underline{Second factor in \eqref{I-eq2}:}
Due to Assumption \eqref{I-ass2} and to \eqref{I-eq1.1}, we get for $\omega \in \Gamma(L,\delta)$:
\begin{align}
&\frac{\mu_{\omega}(0)}{\mu_{\omega}(b_+)} =  \frac{\tfrac{1}{\omega_0}}{e^{-V(b_+)} + e^{-V(b_+-1)}} = \frac{\tfrac{1}{\omega_0}}{e^{-V(b_+)}\cdot (1+\rho_{b_+})} \nonumber \\
\ge\ & \frac{\tfrac{1}{1-\varepsilon}}{1+\tfrac{1-\varepsilon}{\varepsilon}} \cdot e^{V(b_+)}= \frac{\varepsilon}{1 - \varepsilon} \cdot e^{V(b_+)} \ge \frac{\varepsilon}{1 - \varepsilon} \cdot \exp(-\delta L). \label{I-eq3}
\end{align}
Here we used that $V(b_+) \ge -\delta L$ holds for $\omega \in \Gamma(L,\delta)$.\\[10pt]
\underline{Third factor in \eqref{I-eq2}:} 
For the last factor in \eqref{I-eq2}, we can compare the RWRE with the process $(\widetilde{X}_n)_{n \in \N_0}$ which behaves as the original RWRE but is reflected at the positions $T^-:=T^-(L)$ and $T^+:=~T^+(L)$, i.e.\ we have for $x \in \{T^-,T^-+1,\ldots,T^+\}$
\begin{align*}
& P_{\omega}^{x}(\widetilde{X}_0=x) = 1 \vphantom{\widehat{\inf}_{k \in \big\{\lceil \tfrac{\ell}{2}\rceil,\ldots,\ell\big\}}},\\
& P_{\omega}^{x}(\widetilde{X}_{n+1} = y\pm1|\widetilde{X}_n=y) = P_{\omega}^{x}(X_{n+1} = y\pm1|X_n=y)  \vphantom{\widehat{\inf}_{k \in \big\{\lceil \tfrac{\ell}{2}\rceil,\ldots,\ell\big\}}},\ \forall y \in \{T^-+1,\ldots,T^+-1\} \vphantom{\widehat{\inf}_{k \in \big\{\lceil \tfrac{\ell}{2}\rceil,\ldots,\ell\big\}}},\\
& P_{\omega}^{x}(\widetilde{X}_{n+1} = y+1|\widetilde{X}_n=y) = 1 \ \ \  \ \ \ \text{for } y= T^- \vphantom{\widehat{\inf}_{k \in \big\{\lceil \tfrac{\ell}{2}\rceil,\ldots,\ell\big\}}},\\
& P_{\omega}^{x}(\widetilde{X}_{n+1} = y-1|\widetilde{X}_n=y) = 1 \ \ \ \ \ \text{for } y= T^+. \vphantom{\widehat{\inf}_{k \in \big\{\lceil \tfrac{\ell}{2}\rceil,\ldots,\ell\big\}}}
\end{align*}
Therefore, we have for $\ell \in \big\{\left\lceil \tfrac{4n}{3}\right\rceil,\ldots,2n\big\}\cap \big(2\Z+ b_+\big)$
\begin{eqnarray}
P^{o}_{\omega} (X_{\ell} = b_+) \nonumber 
&\ge&  P^{o}_{\omega} (X_{\ell} = b_+,\ \min\{\tau(T^-),\tau(T^+)\} > 2n) \nonumber\\
&\ge&  P^{o}_{\omega} (\widetilde{X}_{\ell} = b_+) - P^{o}_{\omega} ( \min\{\tau(T^-),\tau(T^+)\} \le 2n) \nonumber\\
&\ge& P^{o}_{\omega} \big(\widetilde{X}_{\ell} = b_+, \ \tau(b_+) \le \tfrac{\ell}{2},\ \tau(b_+) < \tau(b_-)\big) - P^{o}_{\omega} ( \min\{\tau(T^-),\tau(T^+)\} \le 2n) \nonumber\\
&\ge& P^{o}_{\omega} \big(\tau(b_+) \le \tfrac{\ell}{2},\ \tau(b_+) < \tau(b_-)\big) \cdot \widehat{\inf\limits}_{k \in \big\{\left\lceil \tfrac{\ell}{2}\right\rceil,\ldots,\ell\big\}} P^{b_+}_{\omega} (\tilde X_{k} = b_+) \nonumber\\
&\ &\  - P^{o}_{\omega} ( \min\{\tau(T^-),\tau(T^+)\} \le 2n).  \label{I-eq4} 
\end{eqnarray}
Using \eqref{I-prel2} and \eqref{I-prel3}, we see that the last term in \eqref{I-eq4} with the negative sign decreases exponentially for $n \le e^{(1-2\delta)L}$, i.e.
\begin{align}
& P^{o}_{\omega} ( \min\{\tau(T^-),\tau(T^+)\} \le 2n) \le P^{o}_{\omega} \left( \min\{\tau(T^-),\tau(T^+)\} \le 2e^{(1-2\delta)L} \right) \nonumber \vphantom{\widehat{\inf}_{k \in \big\{\lceil \tfrac{\ell}{2}\rceil,\ldots,\ell\big\}}}\\
\le\ & P^{o}_{\omega} \left( \tau(T^-) \le 2\, e^{(1-2\delta)L} \right) + P^{o}_{\omega} \left( \tau(T^+) \le 2\, e^{(1-2\delta)L}\right) \nonumber \vphantom{\widehat{\inf}_{k \in \big\{\lceil \tfrac{\ell}{2}\rceil,\ldots,\ell\big\}}}\\
\le\ & 4  e^{(1-2\delta)L} e^{-L} = 4 e^{-2\delta L} \label{I-eq4a}. 
\end{align}
In order to derive a lower bound for the first term in \eqref{I-eq4}, we first notice that the analogous calculation as in \eqref{I-eq5} shows for $\omega \in \Gamma(L,\delta)$ that
\begin{eqnarray}
P^o_{\omega} \left(\tau(b_+) \le \tfrac{\ell}{2},\ \tau(b_+) < \tau(b_-) \right) &\ge& 1 - \frac{2}{\ell} \cdot 4 \cdot L^4 \, e^{2\delta L} - \frac12 \nonumber\\
&\ge&  \frac12 - 6 \, L^4 \, e^{-\delta L} \label{I-eq5.1}
\end{eqnarray}
since $\ell \ge \left\lceil \tfrac{4n}{3}\right\rceil \ge \frac43 \,
e^{3 \delta L}$ for $n \ge e^{3 \delta L}$. For the second factor of \eqref{I-eq4}, we show the following\\

\begin{lem} \label{I-lem3}
For $\omega \in \Gamma(L,\delta)$ and for all $\ell \in 2 \N$, we have
\[
P^{b_+}_{\omega}(\widetilde{X}_{\ell} = b_+) \ge \frac12 \cdot \frac{1}{|T^-|+T^+ +1} \, e^{-\delta L}.
\]
\end{lem}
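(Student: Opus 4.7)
The plan is to exploit reversibility of the reflected chain $(\widetilde X_n)$ and reduce the return-probability bound at $b_+$ to a lower bound on its stationary mass at $b_+$. First I would identify a reversible measure $\tilde\mu$ for $\widetilde X$: inside $(T^-,T^+)$ the transitions of $\widetilde X$ agree with those of $X$, so $\tilde\mu$ can be taken equal to $\mu_\omega$ there. The two boundary values are then pinned down by the detailed balance equations across the reflecting edges, and using $V(y)-V(y-1)=\log\rho_y$ together with uniform ellipticity a short computation yields the clean expressions $\tilde\mu(T^-)=e^{-V(T^-)}$ and $\tilde\mu(T^+)=e^{-V(T^+-1)}$. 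With these endpoint values in hand, the normalising sum telescopes neatly to
\[
Z := \sum_{y=T^-}^{T^+} \tilde\mu(y) \;=\; 2\sum_{y=T^-}^{T^+-1} e^{-V(y)}.
\]
Recognising and verifying this telescoped form is, in my view, the only mildly delicate point in the argument.

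Next I would invoke the standard Cauchy--Schwarz return bound valid for any reversible finite chain, namely $P^x_\omega(\widetilde X_\ell = x) \ge \tilde\mu(x)/Z$ for every even $\ell$. It is obtained by writing, via reversibility,
\[
P^{b_+}_\omega(\widetilde X_\ell = b_+) \;=\; \tilde\mu(b_+)\sum_y \frac{\bigl(P^{b_+}_\omega(\widetilde X_{\ell/2}=y)\bigr)^2}{\tilde\mu(y)},
\]
and applying Cauchy--Schwarz to the identity $\sum_y P^{b_+}_\omega(\widetilde X_{\ell/2}=y)=1$ (with weights $\sqrt{\tilde\mu(y)}$) to conclude that the sum on the right is at least $1/Z$.

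It then remains to estimate $\tilde\mu(b_+)/Z$ on $\Gamma(L,\delta)$. Since $b_+$ is strictly interior, $\tilde\mu(b_+) = \mu_\omega(b_+) \ge e^{-V(b_+)} = e^{R_1^+(L)} \ge 1$, because $V(0)=0$ forces $R_1^+(L)\ge 0$. For the denominator, on $\Gamma(L,\delta)$ one has $-V(y) \le \max(R_1^+(L),R_1^-(L)) \le \delta L$ throughout $[T^-,T^+]$, so $Z \le 2(T^+-T^-)e^{\delta L} \le 2(|T^-|+T^++1)e^{\delta L}$. Plugging these two bounds into the Cauchy--Schwarz estimate delivers exactly the required factor $\tfrac12\,(|T^-|+T^++1)^{-1}e^{-\delta L}$.
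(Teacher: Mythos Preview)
Your argument is correct and follows essentially the same route as the paper: both proofs decompose at the half-time, use reversibility of $(\widetilde X_n)$ with the same measure $\tilde\mu$, and finish with Cauchy--Schwarz. The only cosmetic difference is that you package the estimate as the standard reversible-chain bound $P^{b_+}_\omega(\widetilde X_\ell=b_+)\ge \tilde\mu(b_+)/Z$ (computing $Z$ via the telescoping identity and bounding it by $2(|T^-|+T^++1)e^{\delta L}$), whereas the paper instead pulls out the uniform lower bound $\tilde\mu_\omega(b_+)/\tilde\mu_\omega(x)\ge \tfrac12 e^{-\delta L}$ before applying the unweighted Cauchy--Schwarz inequality $\sum_x p_x^2\ge 1/N$; the two variants yield the identical final bound.
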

\begin{pfof}{Lemma \ref{I-lem3}}
Using the reversibility (cf.\ \eqref{I-eq1}) of $(\widetilde{X}_{\ell})_{\ell \in \N_0}$, we get
\begin{eqnarray}
 P^{b_+}_{\omega}(\widetilde{X}_{\ell} = b_+) &=&
 \sum_{x=T^-}^{T^+} P^{b_+}_{\omega}(\widetilde{X}_{\ell/2} = x) \cdot P^x_{\omega}(\widetilde{X}_{\ell/2} = b_+) \nonumber\\
&=& \sum_{x=T^-}^{T^+} P^{b_+}_{\omega}(\widetilde{X}_{\ell/2} = x) \cdot \frac{\widetilde{\mu}_{\omega}(b_+)}{\widetilde{\mu}_{\omega}(x)} \cdot P^{b_+}_{\omega}(\widetilde{X}_{\ell/2} = x), \label{I-eq7}
\end{eqnarray}
where $\widetilde{\mu}_{\omega}(\cdot)$ denotes a reversible
stationary measure of the reflected random walk $(\widetilde{X}_n)_{n \in \N_0}$ which is unique up to multiplication by a constant. To see that $(\widetilde{X}_{\ell})_{\ell \in \N_0}$ is also reversible, it is enough to note that $(\widetilde{X}_{\ell})_{\ell \in \N_0}$ can again be described as an electrical network with the following conductances:
\begin{align*}
\widetilde{C}_{(x,x+1)}(\omega)= \begin{cases}
C_{(x,x+1)}(\omega) = e^{-V(x)} & \text{for } x=T^-, T^-+1,\ldots, T^+-1 \\
0 & \text{for } x = T^- - 1, T^+.
\end{cases}
\end{align*}
Therefore, a reversible measure for the reflected random walk is given by (cf.\ \eqref{I-eq1.1})
\[
\widetilde{\mu}_{\omega}(x) = \begin{cases}\mu_{\omega}(x) = e^{-V(x)} + e^{-V(x-1)} & \text{for } x=T^-+1, T^-+2,\ldots, T^+-1, \\
e^{-V(T^-)} & \text{for } x=T^-, \\
e^{-V(T^+-1)} & \text{for } x=T^+.
\end{cases}
\]
Since $0 \le b_+ < T^+$, this implies
\begin{eqnarray}
\frac{\widetilde{\mu}_{\omega}(b_+)}{\widetilde{\mu}_{\omega}(x)}
 &\ge& \frac{e^{-V(b_+)}+ e^{-V(b_+-1)}}{e^{-V(x)}+ e^{-V(x-1)}} \nonumber \\
&\ge& \frac{e^{-V(b_+)}}{2\cdot e^{\left( - \min \{V(b_+),V(b_-) \} \right)}} \ge \frac{ e^{-\delta L}}{2} \label{I-eq8}
\end{eqnarray}
for $T^- \le x \le T^+$ and for $\omega \in \Gamma(L,\delta)$. By applying \eqref{I-eq8} to \eqref{I-eq7}, we get
\begin{eqnarray}
 P^{b_+}_{\omega}(\widetilde{X}_{\ell} = b_+)
&\ge&\frac12 \cdot \sum_{x=T^-}^{T^+} \left(P^{b_+}_{\omega}(\widetilde{X}_{\ell/2} = x)\right)^2 \cdot e^{-\delta L}\nonumber\\
&\ge& \frac12 \cdot \frac{1}{|T^-|+T^+ +1} \cdot e^{-\delta L}, \label{I-eq9}
\end{eqnarray}
by the Cauchy-Schwarz inequality since $\sum_{x=T^-}^{T^+} P^{b_+}_{\omega}(\widetilde{X}_{\ell/2} = x)=1$.
\renewcommand{\qedsymbol}{\hfill$\square$\vspace{1ex}}
\end{pfof}

We can now return to the proof of Proposition \ref{I-prop1} and finish our lower bound for the third factor in \eqref{I-eq2}. By applying \eqref{I-eq4a}, \eqref{I-eq5.1} and Lemma \ref{I-lem3} to \eqref{I-eq4}, we get for $e^{3\delta L} \le n \le e^{(1-2\delta) L}$ and $\omega \in \Gamma(L,\delta)$, since $|T^-|, T^+ \le L^2$,
\begin{align}
& \widehat{\inf}_{ \ell \in \big\{\left\lceil \tfrac{4n}{3}\right\rceil,\ldots,2n\big\}}P^{o}_{\omega} (X_{\ell} = b_+) \nonumber \vphantom{\frac12}\\
\ge\ &  \left(\frac12 - 6 \cdot L^4 \cdot e^{-\delta L}\right) \cdot \frac12 \cdot \frac{1}{2L^2 +1} e^{-\delta L} - 4 \cdot e^{-2\delta L} 
\ge\   e^{-\frac32\delta L}\label{I-eq12} \vphantom{\frac12}
\end{align}
for all $L=L(\delta)$ large enough.\\[10pt]
To finish the proof of Proposition \ref{I-prop1}, we can collect our lower bounds in \eqref{I-eq5}, \eqref{I-eq3}, and \eqref{I-eq12} and conclude with \eqref{I-eq2} that for $e^{3 \delta L} \le n \le e^{(1-2\delta)L}$ and for $\omega \in \Gamma(L,\delta)$ we have
\begin{eqnarray*}
P_{\omega} (X_{2n}=0)
&\ge& \left(\frac12 - 6 \cdot L^4 e^{-\delta L}\right) \cdot \frac{\varepsilon}{1-\varepsilon} e^{-\delta L} \cdot e^{-\frac32\delta L}\\
&\ge& e^{-3\delta L}
\end{eqnarray*}
for all $L=L(\delta)$ large enough. This shows \eqref{I-lem1} since we have $P_{\omega}(X_{2n}=0) \ge \varepsilon^{2n} > 0$ for all $n \in \N$ due to assumption \eqref{I-ass2}.
\end{pfof}
\begin{prop} \label{I-lemma2}
For $0 < \delta < 1$, we have
\begin{equation}
\label{I-lem2}
\p (\omega:\ \omega \in \Gamma(L,\delta) \text{ for infinitely many }L )=1.
\end{equation}
\end{prop}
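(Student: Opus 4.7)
The plan is to couple the potential $V$ with a two-sided Brownian motion via Theorem \ref{I-Komlos}, to obtain a uniform lower bound $\inf_L \PP(\Gamma(L,\delta))>0$ from Brownian scaling, and then to upgrade ``positive probability'' to ``infinitely often almost surely'' via Kolmogorov's 0-1 law applied to the i.i.d.\ environment.

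For a two-sided Brownian motion $B$ with variance $\sigma^2=\var(\log\rho_0)$, define the analogues $\widetilde{T}^\pm(L)$, $\widetilde{T}_b^\pm(L)$, $\widetilde{R}_i^\pm(L)$ by replacing $V$ with $B$ in the defining formulas, and let $\widetilde\Gamma(L,\delta)$ denote the corresponding Brownian event. Since times scale as $L^2$ and heights as $L$, Brownian self-similarity ($t \mapsto B(L^2 t)/L$) gives
\[
\PP(\widetilde\Gamma(L,\delta)) = \PP(\widetilde\Gamma(1,\delta)) =: p_\delta,
\]
independent of $L$. Positivity $p_\delta>0$ follows from the full-support property of Brownian motion on $C([-1,1])$: one exhibits an explicit piecewise-linear reference path that realizes all the constraints with strict inequalities and takes a small sup-norm neighborhood. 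Now fix $\delta'\in(0,\delta)$ and $\eta=(\delta-\delta')/10$. Theorem \ref{I-Komlos} guarantees that, almost surely for $L$ large enough, $\sup_{|x|\le L^2}|V(x)-B(x)|\le\eta L$. A monotonicity argument (two paths within $\eta L$ in sup norm produce valleys whose defining minima, maxima and hitting times differ by $O(\eta L)$) shows that, on this event, $\widetilde\Gamma(L,\delta')\subseteq\Gamma(L,\delta)$, whence
\[
\liminf_{L\to\infty}\PP(\Gamma(L,\delta))\ge p_{\delta'}>0.
\]

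Now set $G_\delta:=\{\omega:\omega\in\Gamma(L,\delta)\text{ for infinitely many } L\in\mathbb N\}$. Changing finitely many coordinates of $\omega$ modifies $V$ by a function of bounded sup norm $C_n$ (by uniform ellipticity \eqref{I-ass2}); for $L$ large, $C_n\ll L$, so the perturbation can only move a configuration between $\Gamma(L,\delta-\eta)$ and $\Gamma(L,\delta+\eta)$ for any $\eta>0$. Consequently, for $\omega'$ differing from $\omega$ in finitely many sites,
\[
G_{\delta-\eta}(\omega)\le G_\delta(\omega')\le G_{\delta+\eta}(\omega).
\]
The function $\eta\mapsto\PP(G_\eta=1)$ is non-decreasing and hence has at most countably many discontinuities; at any continuity point $\delta_0\in(0,\delta)$, the above squeeze forces $G_{\delta_0}$ to agree almost surely with a tail-measurable event in the i.i.d.\ sequence $(\omega_i)_{i\in\mathbb Z}$, so by Kolmogorov's 0-1 law $\PP(G_{\delta_0})\in\{0,1\}$. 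Using $\PP(\limsup_L A_L)\ge\limsup_L\PP(A_L)$ together with the lower bound from the previous paragraph,
\[
\PP(G_{\delta_0}) \;\ge\; \limsup_{L\to\infty}\PP(\Gamma(L,\delta_0)) \;>\; 0,
\]
so $\PP(G_{\delta_0})=1$. Since $G_\delta\supseteq G_{\delta_0}$, taking any such continuity point $\delta_0<\delta$ yields $\PP(G_\delta)=1$.

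The main technical obstacle is the stability step in the middle paragraph: the hitting times $T^\pm(L)$ and the bottom-point minimizers $T_b^\pm(L)$ depend discontinuously on the path in general, so one must phrase $\widetilde\Gamma(L,\delta')$ with strict inequalities and carefully verify that an $\eta L$ sup-norm perturbation preserves the valley geometry with an $O(\eta L)$ slack in each of the constraints $R_1^\pm, R_2^\pm, T^\pm$. The remaining steps are direct applications of Brownian scaling, the KMT coupling, and the Kolmogorov 0-1 law.
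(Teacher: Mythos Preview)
There is a genuine gap in your zero--one law step. The squeeze
\[
G_{\delta-\eta}(\omega)\le G_\delta(\omega')\le G_{\delta+\eta}(\omega)
\]
rests on the claim that a bounded sup-norm perturbation of $V$ can only move a configuration between $\Gamma(L,\delta-\eta)$ and $\Gamma(L,\delta+\eta)$. But the constraint $T^\pm(L)\le L^2$ in the definition of $\Gamma(L,\delta)$ does not involve $\delta$ at all, so varying $\delta$ provides no slack for it. If $\omega\in\Gamma(L,\delta)$ with $T^+(L)$ close to $L^2$, a finite modification of $\omega$ shifts $V$ by a bounded function and can push the first-passage time $T^+(L)$ (which is discontinuous in the path) above $L^2$; the resulting $\omega'$ fails the $T^+$ constraint for that $L$ no matter how $\delta$ is tuned. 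Trying a nearby level does not help either: from $\|V-V'\|_\infty\le C$ one only gets $T^{+\prime}(L-2C)\le T^+(L)\le L^2$, and since $L^2>(L-2C)^2$ this does not yield $T^{+\prime}(L')\le (L')^2$ for any $L'$. So $G_\delta$ is not (equivalent to) a tail event of the i.i.d.\ sequence $(\omega_i)_{i\in\Z}$, and Kolmogorov's law cannot be invoked. Your continuity-point device handles the $\delta$-dependent constraints $R_1^\pm,R_2^\pm$ but not $T^\pm$.

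The paper's proof avoids any zero--one law. It chooses a rapidly growing sequence $L_k$ with $L_k\ge c\,L_{k-1}^2$, so that the Brownian event at scale $L_k$ is essentially determined by the increments of $B$ beyond time $L_{k-1}^2$; the Markov property of Brownian motion then gives a product bound
\[
\p\Big(\bigcap_{k=\ell+1}^{n}\big(A^+(L_k,\delta)\cap A^-(L_k,\delta)\big)^c\Big)\le (1-p)^{\,n-\ell}+\text{(tail error)},
\]
with $p=\p\big(D^+(1,\delta)\cap D^-(1,\delta)\big)>0$ independent of $k$ by scaling. Letting $n\to\infty$ and then $\ell\to\infty$ forces the left side to $0$. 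This is a direct ``approximately independent blocks / second Borel--Cantelli'' argument, and it supplies exactly what your sketch lacks: a decorrelation mechanism across scales. The lower bound $\liminf_L\p(\Gamma(L,\delta))>0$ alone, without such decorrelation, does not imply that the events occur infinitely often almost surely.
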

\begin{pfof}{Proposition \ref{I-lemma2}}
Let $(B(t))_{t \in \R}$ be the two-sided Brownian motion from Theorem \ref{I-Komlos} and let us choose some $0 < \delta < \tfrac12$. For $y \in \R$ we define
$$\widehat{T}^+(y):= \inf\{t \ge 0:\ B(t)= y\}\ \ \mbox{and}\ \ 
\widehat{T}^-(y):= \sup\{t \le 0:\ B(t)= y\}$$
as the first hitting times of $y$ on the positive and negative side of the origin, respectively. Additionally, for $L \in \N$, $i \in \N$, $y \in \R$, we can introduce the following sets
$$
F_L^+(y):= \{\widehat{T}^+ \left(y \cdot L \right) < \widehat{T}^+ \left(- y \cdot L \right)\}\ \ \mbox{and}\ \ 
F_L^-(y):=  \{\widehat{T}^-\left(y \cdot L \right) < \widehat{T}^-\left(-y \cdot L \right)\}$$
on which the Brownian motion reaches the value $y \cdot L$ before $-y \cdot L$. Further we define
\begin{align*}
G_L^+(i):=\ & \left\{B(t) \ge (2i-1) \cdot \tfrac{\delta}{4} \cdot  L  \quad \text{for} \quad \widehat{T}^+\left(2i \cdot \tfrac{\delta}{4} \cdot L \right) \le t \le \widehat{T}^+\left((2i+2) \cdot \tfrac{\delta}{4} \cdot L \right) \right\}, \\
G_L^-(i):=\ & \left\{B(t) \ge (2i-1) \cdot \tfrac{\delta}{4} \cdot  L  \quad \text{for} \quad \widehat{T}^-\left((2i+2) \cdot \tfrac{\delta}{4} \cdot L \right) \le t \le \widehat{T}^-\left(2i \cdot \tfrac{\delta}{4} \cdot L \right) \right\}
\intertext{on which the Brownian motion does not decrease much between the first hitting time of the two levels of interest. Using these sets, we can define the sets}
A^+(L,\delta):=\ & F_L^+(\delta) \cap \left\{ \widehat{T}^+(1.1 \cdot L) \le L^2,\  \min_{\widehat{T}^+(\delta \cdot L) \le t \le \widehat{T}^+(1.1 \cdot L)} B(t) \ge \frac{\delta}{4} \cdot  L \right\}, \\
A^-(L,\delta):=\ & F_L^-(\delta) \cap \left\{ - \widehat{T}^-(1.1 \cdot L) \le L^2,\  \min_{\widehat{T}^-(1.1 \cdot L) \le t \le \widehat{T}^-(\delta \cdot L)} B(t) \ge \frac{\delta}{4} \cdot  L \right\}, \\
D^+(L,\delta):=\ & G_L^+(0) \cap G_L^+(1) \cap G_L^+(2) \allowdisplaybreaks[0]\\
& \cap \left\{ \widehat{T}^+(1.2 \cdot L) \le 0.9 \cdot L^2,\  \min_{\widehat{T}^+ \big(\tfrac{3 \cdot \delta}{2} \cdot L \big) \le t \le \widehat{T}^+(1.2 \cdot L)} B(t) \ge \frac{3\delta}{4} \cdot  L \right\}, \\
D^-(L,\delta):=\ & G_L^-(0) \cap G_L^-(1) \cap G_L^-(2) \\
& \cap \left\{ - \widehat{T}^-(1.2 \cdot L) \le 0.9 \cdot L^2,\  \min_{\widehat{T}^-(1.2 \cdot L) \le t \le \widehat{T}^-\big(\tfrac{3\delta}{2} \cdot L \big)} B(t) \ge \frac{3\delta}{4} \cdot  L \right\}
\end{align*}
which will be used for an approximation of our previously constructed valleys $\omega$ belonging to $\Gamma(L,\delta)$ which we illustrated in Figure \ref{I-figure1} on page \pageref{I-figure1}. Here, we added the factors $1.1$, $1.2$ and $0.9$ in contrast to the construction before in order to have some space for the approximation. For the Brownian motion, we can directly compute that we have
\begin{equation}
\label{I-eq15}
\p\big(D^+(1,\delta) \cap D^-(1,\delta)\big) > 0.
\end{equation}
Thereby, for all $L\in\N$, due to the scaling property of the Brownian motion, $( B(L^2 \cdot t)/L)_{ t \in \R}$
is again a two-sided Brownian motion with diffusion constant $\sigma$, this implies 
\begin{equation} \label{I-eq16}
\p\big(D^+(L,\delta) \cap D^-(L,\delta)\big) = \p\big(D^+(1,\delta) \cap D^-(1,\delta)\big) > 0.
\end{equation}
First, we notice that for $L_0 \in \N$ we have
\begin{align} \label{I-eq25}
& \p \left(\, \bigcap_{L=L_0}^{\infty} \Big( A^+(L,\delta) \cap A^-(L,\delta) \Big)^c \right)
\le \p \left(\ \bigcap_{k=\ell+1}^{\infty} \Big( A^+(L_k,\delta) \cap A^-(L_k,\delta) \Big)^c \right)
\end{align} 
for arbitrary $\ell \in \N_0$, where we define 
$$
L_k := \max \left\{10, \left\lceil \tfrac{2}{\delta} \right\rceil \right\} \cdot (L_{k-1})^2$$
for $k \in \N$ inductively. Note that for $n > \ell +1$ 
with
\[
\mathcal{F}_n:=\sigma \left( \big(B(t)\big)_{-(L_{n-1})^2 \le t \le (L_{n-1})^2} \right),
\]
the following holds:
\begin{align}
& \p \left(\ \bigcap_{k=\ell+1}^{n} \Big( A^+(L_k,\delta) \cap A^-(L_k,\delta) \Big)^c \right) \nonumber \vphantom{\E \left[ \prod_{k=\ell+1}^{n-1} \mathbf{1}_{\big( A^+(L_k,\delta) \cap A^-(L_k,\delta)  \big)^c} \cdot \mathbf{1}_{\left\{ \max\limits_{-(L_{n-1})^2 \le t \le (L_{n-1})^2} |B(t)| \le (L_{n-1})^2 \right\}} \right.}\\
\le\ &  \E \left[ \prod_{k=\ell+1}^{n-1} \mathbf{1}_{\big( A^+(L_k,\delta) \cap A^-(L_k,\delta)  \big)^c} \cdot \mathbf{1}_{\left\{ \max\limits_{-(L_{n-1})^2 \le t \le (L_{n-1})^2} |B(t)| < (L_{n-1})^2 \right\}} \right. \nonumber\allowdisplaybreaks[0]\\
& \hspace{-5.5pt} \cdot \left. \left.  \E\left[ \vphantom{\prod_{k=\ell+1}^{n-1}} \mathbf{1}_{\left\{ \big(B(t + (L_{n-1})^2) - B((L_{n-1})^2) \big)_{t \in \R} \notin D^+ \left(L_n, \delta \right) \right\} \cup \left\{ \big(B(t - (L_{n-1})^2) - B(-(L_{n-1})^2) \big)_{t \in \R} \notin D^- \left(L_n, \delta \right) \right\}}  \right| \hspace{-1pt} \mathcal{F}_n \hspace{-1pt} \right] \hspace{-2pt} \right] \nonumber\allowdisplaybreaks[0]\\
& +  \p \left( \max\limits_{-(L_{n-1})^2 \le t \le (L_{n-1})^2} |B(t)| \ge (L_{n-1})^2 \right) \nonumber \vphantom{\E \left[ \prod_{k=\ell+1}^{n-1} \mathbf{1}_{\big( A^+(L_k,\delta) \cap A^-(L_k,\delta)  \big)^c} \cdot \mathbf{1}_{\left\{ \max\limits_{-(L_{n-1})^2 \le t \le (L_{n-1})^2} |B(t)| \le (L_{n-1})^2 \right\}} \right.}\\
\le\ &  \Big(1 - \p \Big(D^+ \left(L_n , \delta \right) \cap D^- \left(L_n, \delta \right) \Big) \Big) \cdot \p \left(\ \bigcap_{k=\ell+1}^{n-1} \Big( A^+(L_k,\delta) \cap A^-(L_k,\delta) \Big)^c \right) \nonumber \vphantom{\E \left[ \prod_{k=\ell+1}^{n-1} \mathbf{1}_{\big( A^+(L_k,\delta) \cap A^-(L_k,\delta)  \big)^c} \cdot \mathbf{1}_{\left\{ \max\limits_{-(L_{n-1})^2 \le t \le (L_{n-1})^2} |B(t)| \le (L_{n-1})^2 \right\}} \right.}\\
&  + \p \left( \max\limits_{-(L_{n-1})^2 \le t \le (L_{n-1})^2} |B(t)| \ge (L_{n-1})^2 \right) \nonumber \vphantom{\E \left[ \prod_{k=\ell+1}^{n-1} \mathbf{1}_{\big( A^+(L_k,\delta) \cap A^-(L_k,\delta)  \big)^c} \cdot \mathbf{1}_{\left\{ \max\limits_{-(L_{n-1})^2 \le t \le (L_{n-1})^2} |B(t)| \le (L_{n-1})^2 \right\}} \right.}\\
\le\ &  \Big(1 - \p \Big(D^+ \left(1 , \delta \right) \cap D^- \left(1, \delta \right) \Big) \Big)^{n-\ell}   + \sum_{k=\ell+1}^{n} \p \left( \max\limits_{-(L_{k-1})^2 \le t \le (L_{k-1})^2} |B(t)| \ge (L_{k-1})^2 \right) \label{I-eq26} \vphantom{\E \left[ \prod_{k=\ell+1}^{n-1} \mathbf{1}_{\big( A^+(L_k,\delta) \cap A^-(L_k,\delta)  \big)^c} \cdot \mathbf{1}_{\left\{ \max\limits_{-(L_{n-1})^2 \le t \le (L_{n-1})^2} |B(t)| \le (L_{n-1})^2 \right\}} \right.}.
\end{align}
To see that the first step in \eqref{I-eq26} holds, note that for 
\begin{align}
\omega \in &
\left\{\max\limits_{-(L_{n-1})^2 \le t \le (L_{n-1})^2)} |B(t)| < (L_{n-1})^2 \right\} \nonumber\\
& \cap \left\{\big(B(t + (L_{n-1})^2 ) - B((L_{n-1})^2) \big)_{t \in \R} \in D^+ \left(L_n, \delta \right) \right\} \label{I-eq26.x}
\end{align}
we have
\begin{eqnarray*}
 \min_{0 \le t \le (L_n)^2} B(t) &\ge& \min_{0 \le t \le (L_{n-1})^2} B(t) + \min_{(L_{n-1})^2 \le t \le (L_n)^2} B(t+(L_{n-1})^2) - B((L_{n-1})^2) \\
&>& - (L_{n-1})^2 - \frac{\delta}{4} \cdot L_n > - \delta \cdot L_n
\end{eqnarray*}
since $(L_{n-1})^2\le\delta L_n/2$ and
\begin{eqnarray*}
\max_{0 \le t \le (L_n)^2} B(t) &\ge& B\big((L_{n-1})^2\big) + \max_{(L_{n-1})^2 \le t \le (L_n)^2 - (L_{n-1})^2} B(t+(L_{n-1})^2) - B((L_{n-1})^2)\\
&\ge& - (L_{n-1})^2 + 1.2 \cdot L_n \ge 1.1 \cdot L_n
\end{eqnarray*}
since $(L_{n-1})^2\le L_n/10$.
In particular, we have $\widehat{T}^+(\delta \cdot L_n) < \widehat{T}^+(- \delta \cdot L_n)$ and $\widehat{T}^+(1.1 \cdot L_n) \le (L_n)^2$ on the considered set. Similarly, again on the set in \eqref{I-eq26.x}, we see that we have
\begin{eqnarray*}
\widehat{T}^+(\delta \cdot L_n) &>& \inf\{t \ge (L_{n-1})^2:\ (B(t + (L_{n-1})^2 ) - B((L_{n-1})^2) \ge \tfrac{\delta}{2} \cdot L_n\}, \\
  \widehat{T}^+(\delta \cdot L_n) &<& \inf\{t \ge (L_{n-1})^2:\ (B(t + (L_{n-1})^2 ) - B((L_{n-1})^2) \ge \tfrac{3\cdot \delta}{2} \cdot L_n\},
\end{eqnarray*}
since $(L_{n-1})^2\le\delta L_n/2$, this implies
$$ \min_{\widehat{T}^+(\delta \cdot L) \le t \le \widehat{T}^+(1.1 \cdot L)} B(t) \ge  \frac{\delta}{4} \cdot L_n$$
by construction of $D^+(L_n,\delta)$.
Altogether, we can conclude that $\omega \in A^+(L_n,\delta)$ holds for our choice of $\omega$ in \eqref{I-eq26.x}. The argument for the negative part runs completely analogously. Further in \eqref{I-eq26}, we used the Markov property of the Brownian motion in the second step. Additionally, we iterated the first two steps $n-\ell-1$ times and used \eqref{I-eq16} for the last step. To control the last sum in \eqref{I-eq26}, let us recall
that due to the reflection principle (see e.g. Chapter III, Proposition 3.7
in \cite{RevuzYor}), we have
$$\forall T,x>0,\ \ \p\left(\max_{t\in[0,T]}\frac{B(t)}{\sigma\sqrt{T}}\ge x\right)=\p\left( |Z| \ge x \right)=2\p\left( Z \ge x \right) \le \frac{1}{x} \cdot \frac{e^{- \frac{x^2}{2}}}{\sqrt{2\pi}}
$$
for a random variable $Z \sim \mathcal{N}(0,1)$ (the last estimate can be found for example in Lemma 12.9 in Appendix B of \cite{morters}). Due to this upper bound, we can conclude that
\begin{align}
& \sum_{k=\ell+1}^{n} \p \left( \max\limits_{-(L_{k-1})^2 \le t \le (L_{k-1})^2} |B(t)| \ge (L_{k-1})^2 \right) \le 4 \cdot \sum_{k=\ell+1}^{n} \p \left( \max\limits_{0\le t \le (L_{k-1})^2} \frac{B(t)}{\sigma \cdot L_{k-1}}  \ge \frac{L_{k-1}}{\sigma} \right) \nonumber \\
\le\ & 8 \cdot \sum_{k=\ell+1}^{\infty} \frac{\sigma}{L_{k-1}} \cdot \frac{1}{\sqrt{2 \pi}} \cdot e^{- \frac{(L_{k-1})^2}{2 \sigma^2}} \xrightarrow{\ell \to \infty} 0. \label{I-eq27}
\end{align}
By combining the upper bounds in \eqref{I-eq25}, \eqref{I-eq26}, and \eqref{I-eq27}, we get for all $\ell \in \N_0$
\begin{align*}
& \p \left( \omega \notin  \big( A^+(L,\delta) \cap A^-(L,\delta) \big) \text{ for all } L \ge L_0\right) \vphantom{\sum_{k=\ell+1}^{\infty}}\\
\le\ & \lim_{n \to \infty}  \Big(1 - \p \Big(D^+ \left(1 , {\delta}\right) \cap D^- \left(1,{\delta} \right) \Big) \Big)^{n-\ell} \vphantom{\sum_{k=\ell+1}^{\infty}} \\
&  + \sum_{k=\ell+1}^{\infty} \p \left( \max\limits_{-(L_{k-1})^2 \le t \le (L_{k-1})^2} |B(t)| \ge (L_{k-1})^2 \right) \xrightarrow{\ell \to \infty} 0.
\end{align*}
Since $L_0 \in \N$ was chosen arbitrarily, we can conclude that for $0 < \delta < \tfrac12$ we have
\[
\p \left(\omega:\  \omega \in  \big(A^+(L, \delta) \cap A^-(L, \delta)\big) \text{ for infinitely many } L \right) = 1.
\]
Using the Koml{\'o}s-Major-Tusn{\'a}dy strong approximation Theorem (cf.\ Theorem \ref{I-Komlos}), we see that for $0 < \delta < \tfrac12$ we have
\begin{align*}
& \left\{\omega:\  \omega \in  \big(A^+(L, \delta) \cap A^-(L, \delta)\big) \text{ for infinitely many } L \right\} \allowdisplaybreaks[0]\\
\subseteq\ &  \left\{\omega:\ \omega \in \Gamma(L,2\delta) \text{ for infinitely many }L  \right\}, 
\end{align*}
which is enough to conclude that \eqref{I-lem2} holds for all $0 < \delta < 1$.
\end{pfof}

With the help of Proposition \ref{I-prop1} and Proposition \ref{I-lemma2}, we can now turn to the proofs of 
Theorems \ref{I-Rthm1} and \ref{I-Rthm2} and Corollary \ref{I-Rthm3}:

\begin{pfof}{Theorem \ref{I-Rthm1}} For a fixed $0 \le \alpha < 1$, we choose $0 < \delta < \tfrac16$ such that
$\alpha < (1-5\delta)/(1-2\delta)$.
For $\omega \in \Gamma(L, \delta)$, the inequality in \eqref{I-lem1} implies that
\begin{align*}
\sum_{n \in \N} P_{\omega}(X_{2n}=0) \cdot n^{-\alpha} &\ge \sum_{\lceil e^{3\delta L}\rceil \le n \le \lfloor e^{(1-2\delta)L}\rfloor} P_{\omega}(X_{2n}=0) \cdot n^{-\alpha} \\
&\ge \Big(e^{(1-2\delta)L} - e^{3\delta L} - 1 \Big) \cdot C \cdot e^{-3\delta L} \cdot \left(e^{(1-2\delta)L}\right)^{-\alpha}  \vphantom{\sum_{\lceil e^{3\delta L}\rceil \le n \le \lfloor
e^{(1-2\delta)L}\rfloor}}\\
&= C \cdot  \Big(e^{(1-5\delta)L} - 1 - e^{-3\delta L}\Big) \cdot e^{-\alpha(1-2\delta)L}  \xrightarrow{L\to\infty} \infty \nonumber .
\end{align*}
Since Proposition \ref{I-lemma2} shows that for $\p$-a.e.\ environment $\omega$ we find $L$ arbitrarily large such that $\omega \in \Gamma(L,\delta)$, we can conclude that \eqref{I-thm1} holds for $\p$-a.e.\ environment $\omega$.
\end{pfof}

\begin{pfof}{Theorem \ref{I-Rthm2}}
For fixed $\alpha > 0$, we choose $\delta$ such that
$
0 < \delta < \min \left\{ \frac{1}{2+3\alpha}, \frac{1}{5} \right\}$, which yields $1-2\delta -3\alpha \delta > 0$ and $1- 2 \delta > 3 \delta$.
For $\omega \in \Gamma(L,\delta)$, the inequality in \eqref{I-lem1} implies
\begin{align*}
& \hphantom{\ge}\ \sum_{n \in \N} \Big( P_{\omega}(X_{2n}=0)\Big)^{\alpha} \ge \sum_{\lceil e^{3\delta L}\rceil \le n \le \lfloor
e^{(1-2\delta)L}\rfloor} \Big(P_{\omega}(X_{2n}=0)\Big)^{\alpha} \\
&\ge \Big(e^{(1-2\delta)L} - e^{3\delta L} - 1 \Big) \cdot \big(C \cdot e^{-3\delta L}\big)^{\alpha} \vphantom{\sum_{\lceil e^{3\delta L}\rceil \le n \le \lfloor e^{(1-2\delta)L}\rfloor}}\\
&=  C^{\alpha} \cdot \Big(e^{(1-2\delta -3\alpha \delta)L} - 
e^{(3\delta - 3 \alpha \delta) L}  - e^{ -3\alpha \delta L} \Big) \vphantom{\sum_{\lceil e^{3\delta L}\rceil \le n \le \lfloor
e^{(1-2\delta)L}\rfloor}} \xrightarrow{L\to\infty} \infty. 
\end{align*}
Again since Proposition \ref{I-lemma2} shows that for $\p$-a.e.\ environment $\omega$ we find $L$ arbitrarily large such that $\omega \in \Gamma(L,\delta)$, we can conclude that \eqref{I-thm2} holds for $\p$-a.e.\ environment $\omega$.
\end{pfof}

\begin{pfof}{Corollary \ref{I-Rthm3}}
Due to the independence of the environments $\omega^{(1)}, \omega^{(2)}, \ldots, \omega^{(d)}$, we can extend the proof of Proposition \ref{I-lemma2} to get
\begin{equation}
\label{I-eq16.2}
\p^{\otimes d} \left(\text{For infinitely many } L \in \N, \text{ we have} \ \omega^{(i)} \in \Gamma(L,\delta)\ \text{for } i=1,2,\ldots d \right)=1
\end{equation}
for all $0 < \delta < 1$.
Indeed \eqref{I-eq26} becomes 

\begin{align}
& \p \left(\forall k=\ell+1,\cdots,n,\ 
\exists i,\ \omega_i\not\in A^+(L_k,\delta) \cap A^-(L_k,\delta) \right) \nonumber\\
\le\ &  \Big(1 - \p \Big(\forall i,\ B^{(i)}\in D^+ \left(L_n , \delta \right) \cap D^- \left(L_n, \delta \right) \Big) \Big) \cdot \p \left(\forall k=\ell+1,\cdots,n-1,\ 
\exists i,\ \omega_i\not\in A^+(L_k,\delta) \cap A^-(L_k,\delta)    \right) \nonumber \\
&  + \p \left( \max_{i=1,\cdots,d}\max\limits_{-(L_{n-1})^2 \le t \le (L_{n-1})^2} |B^{(i)}(t)| \ge (L_{n-1})^2 \right).\nonumber
\end{align}
Now, using Proposition \ref{I-prop1}, we have for $(\omega^{(1)}, \omega^{(2)}, \ldots, \omega^{(d)})$ with $\omega^{(i)} \in \Gamma(L,\delta)$ for $i=1,2,\ldots d$ 
\begin{align*}
& \sum_{n \in \N} \prod_{k=1}^{d} P_{\omega^{(k)}}(X_{2n}=0) 
 \ge  \sum_{\lceil e^{3\delta L}\rceil \le n \le \lfloor e^{(1-2\delta)L}\rfloor} \prod_{k=1}^{d} P_{\omega^{(k)}}(X_{2n}=0) \\
&\ge  \Big(e^{(1-2\delta)L} - e^{3\delta L} - 1 \Big) \cdot C^d \cdot e^{-3 \delta d L}\\
&= C^d \cdot  \Big(e^{(1-2\delta-3 \delta d)L}  - e^{(3\delta -3\delta d) L} - e^{-3 \delta d L} \Big) \xrightarrow{L\to\infty} \infty
\end{align*}
for $0 < \delta < \frac{1}{2+3d}$.
Since \eqref{I-eq16.2} holds for arbitrarily small $\delta$, we can conclude that \eqref{I-eqcor2} holds for $\p^{\otimes d}$-a.e.\ environment $(\omega^{(1)}, \omega^{(2)}, \ldots, \omega^{(d)})$.
\end{pfof}
\section{Recurrence properties of the RWRE (I)-(III)} \label{I-sec1.6}

\subsection{Direct products involving a one dimensional RWRE}

\begin{prop}[Case (I)]\label{I-cor6}
Fix a random environment $\omega$ which fulfils \eqref{I-ass1} and \eqref{I-ass2}. Let $(X_n,Y_n)_{n \in \N_0}$ be a 2-dimensional process where $(X_n)_{n\in \N_0}$ and $(Y_n)_{n\in \N_0}$ are independent with respect to $P_{\omega}$, $(X_n)_{n\in \N_0}$ being a RWRE in the environment $\omega$ (in the sense of \eqref{I-RWRE}) and $(Y_n)_{n\in \N_0}$ a centered random walk such that $Y_0=0$ and $(Y_n/A_n)_n$
converges in distribution to a $\beta$-stable distribution 
with $\beta\in(1,2]$ (for some suitable normalization $A_n$).

Then, $(X_n,Y_n)_{n \in \N_0}$ is recurrent for $\p$-a.e.\ environment $\omega$.
\end{prop}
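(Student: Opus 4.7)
The plan is to reduce the quenched recurrence of $(X_n,Y_n)_n$ to the divergence of a Green function at the origin, and then combine Theorem \ref{I-Rthm1} with a local limit theorem for $(Y_n)_n$. Under $P_\omega$, the pair $(X_n,Y_n)_n$ is a Markov chain with product transition probabilities, and the standard dichotomy for Markov chains on countable state spaces says that the state $(0,0)$ is $P_\omega$-recurrent if and only if
\begin{equation*}
\sum_{n\ge 0} P_\omega\bigl((X_n,Y_n)=(0,0)\bigr)=\infty.
\end{equation*}
The independence of $(X_n)_n$ and $(Y_n)_n$ under $P_\omega$, together with the parity constraint $X_n=0\Rightarrow n\in 2\N_0$, reduces this to showing $\sum_{m\ge 0} P_\omega(X_{2m}=0)\cdot P_\omega(Y_{2m}=0)=\infty$.

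Next I would apply the Gnedenko--Rva\v{c}eva local limit theorem for a random walk in the domain of attraction of a stable law of index $\beta\in(1,2]$: there exists a constant $c>0$ such that $P_\omega(Y_{2m}=0)\ge c/A_{2m}$ for all large enough $m$ along the arithmetic progression of indices compatible with the lattice span of $\nu$. Writing $A_n=n^{1/\beta}L(n)$ with $L$ slowly varying and noting that $1/\beta<1$, any fixed $\alpha\in(1/\beta,1)$ then gives $P_\omega(Y_{2m}=0)\ge m^{-\alpha}$ for all large $m$ in this progression. Theorem \ref{I-Rthm1} yields $\sum_m P_\omega(X_{2m}=0)\,m^{-\alpha}=\infty$ for $\p$-a.e.\ $\omega$; since Proposition \ref{I-prop1} produces the uniform bound $P_\omega(X_{2n}=0)\ge C e^{-3\delta L}$ on the whole interval $e^{3\delta L}\le n\le e^{(1-2\delta)L}$, the same divergence remains true after restriction to any fixed arithmetic subprogression of $\N$ (the length of that interval dominates any fixed period). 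Combining the two lower bounds gives $\sum_m P_\omega(X_{2m}=0)\,P_\omega(Y_{2m}=0)=\infty$ for $\p$-a.e.\ $\omega$, and hence recurrence of $(X_n,Y_n)_n$.

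The main obstacle is the lattice/periodicity bookkeeping for $(Y_n)_n$: one has to identify the smallest subgroup $d\Z\subset\Z$ carrying $(Y_n)_n$, align it with the parity constraint coming from $(X_n)_n$ so that the state $(0,0)$ is actually attainable from itself in a full arithmetic progression of times, and verify that the local limit theorem provides the sharp lower bound $c/A_{2m}$ along that subsequence. Beyond this, the argument is essentially formal: Theorem \ref{I-Rthm1} is calibrated precisely so that any exponent $\alpha<1$ is admissible, which is exactly what is needed to absorb the factor $P_\omega(Y_{2m}=0)\gtrsim m^{-1/\beta}$ whenever $\beta>1$. This is the mechanism that explains why the direct product of Sinai's walk with any $\beta$-stable random walk ($\beta\in(1,2]$) is recurrent, in contrast with the situation where $(X_n)_n$ is replaced by a simple symmetric random walk.
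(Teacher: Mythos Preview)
Your proposal is correct and follows essentially the same route as the paper: factor the return probability by independence, use the local limit theorem to get $P_\omega(Y_{2m}=0)\gtrsim A_{2m}^{-1}\gtrsim m^{-\alpha}$ for any fixed $\alpha\in(1/\beta,1)$, and then invoke Theorem~\ref{I-Rthm1}. The paper handles the periodicity issue by introducing $d_0:=\gcd\{m\ge 1:\ P(X_m=Y_m=0)\ne 0\}$ and summing along multiples of $d_0$, which is exactly the arithmetic-progression restriction you justify via the uniform bound in Proposition~\ref{I-prop1}.
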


\begin{pfof}{Proposition \ref{I-cor6}}
Due to the local limit theorem, we have
$P(Y_{d_0n}=0)\sim C(A_n)^{-1}$ for some $C>0$
and for $d_0:=gcd\{m\ge 1,\, \p(X_m=Y_m=0)\ne 0\}$.
Recall that $A_n=n^{\frac 1\beta}L(n)$ with $L$
a slowly varying function.
Due to the independence of the two components, 
we have
$$
\sum_{n \in \N} P_{\omega}\big((X_{n},Y_{n})=(0,0)\big) = \sum_{n \in \N} P_{\omega}\big(X_{d_0n}=0\big) \cdot P_{\omega}\big(Y_{d_0n}=0\big) 
 = \infty,$$
where the last equation is due to Theorem \ref{I-Rthm1} applied with $\frac 1\beta<\alpha<1$.
This proves the recurrence of the process $(X_n,Y_n)_{n \in \N_0}$ for $\p$-a.e.\ environment $\omega$.
\renewcommand{\qedsymbol}{$\square$}
\end{pfof}

Observe that, if we take $(X_n)_n$ to be the simple symmetric random walk on $\mathbb Z$ in Proposition \ref{I-cor6} instead of Sinai's 
walk, we have $\mathbb P(X_{2n}=0)\sim cn^{-\frac 12}$ and hence
we lose the recurrence as soon as $\beta<2$.
\subsection{Other two-dimensional RWRE governed by a one-dimensional RWRE}
We study now the cases (II) and (III). 
We consider a process moving horizontally with
probability $\delta$ and vertically with probability
$1-\delta$. We assume that the horizontal displacements
follow Sinai's  walk and that the vertical ones either follow some
recurrent random walk (case (II)) or depend on the parity
of the first coordinate of the current position (case (III)).

\begin{prop}[Case (II)] \label{I-cor1.5.7}
Let $\delta\in(0,1)$.
Let  $\omega=(\omega_x)_x$ be a random environment which fulfils \eqref{I-ass1} and \eqref{I-ass2}. We assume that, given $\omega$, $(M_n)_{n \in \N_0}$ is a Markov chain with values in $\Z^2$ such that
\begin{align*}
&P_{\omega}\big(M_0=(0,0)\big) = 1 \vphantom{\frac{1 - \delta}{2}},\\
&P_{\omega}\big(M_{n+1}=(x+1,y)\big|M_{n}=(x,y)\big)=\delta \cdot \omega_x \vphantom{\frac{1 - \delta}{2}},\\
&P_{\omega}\big(M_{n+1}=(x-1,y)\big|M_n=(x,y)\big)=\delta \cdot (1-\omega_x) \vphantom{\frac{1 - \delta}{2}},\\
&P_{\omega}\big(M_{n+1}=(x,y+z)\big|M_{n}=(x,y)\big)=(1 - \delta)\cdot\nu(z),
\end{align*}
where $\nu$ is a probability distribution on $\mathbb Z$
such that $(\nu^{*n}(A_n\cdot))_n$ converges to
a $\beta$-stable distribution with $\beta\in(1,2]$ (for some suitable increasing 
sequence $(A_n)_n$ of positive real numbers).
Then, $(M_{n})_{n \in \N_0}$ is recurrent for $\p$-a.e.\ environment $\omega$.
\end{prop}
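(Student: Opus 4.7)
The plan is to decompose $(M_n)$ along its Bernoulli horizontal/vertical clock into two conditionally independent pieces, use a local limit theorem for the vertical walk, and reduce the divergence of $\sum_n P_\omega(M_n=0)$ to that of $\sum_m P_\omega(X_{2m}=0)\,m^{-\alpha}$ for some $\alpha\in(1/\beta,1)$, which is exactly Theorem \ref{I-Rthm1}. Concretely, I would realize $(M_n)$ on an enlarged space from three independent families (under $P_\omega$): a Bernoulli$(\delta)$ clock $(\xi_k)_{k\ge 1}$ indicating horizontal moves, Sinai's walk $(X_k)_{k\ge 0}$ in the environment $\omega$, and a random walk $(Y_j)_{j\ge 0}$ with step law $\nu$. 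Setting $N_n:=\xi_1+\cdots+\xi_n$ one has $M_n=(X_{N_n},Y_{n-N_n})$, and conditioning on $(\xi_k)_k$ yields
\[
P_\omega(M_n=(0,0))=\sum_{k=0}^{n}\binom{n}{k}\delta^{k}(1-\delta)^{n-k}\,P_\omega(X_k=0)\,P(Y_{n-k}=0).
\]
Summing in $n$ and exchanging sums (all terms nonnegative) reduces the problem to
\[
\sum_{n\ge 0}P_\omega(M_n=(0,0))=\sum_{k\ge 0}P_\omega(X_k=0)\,\Psi(k),\qquad \Psi(k):=\sum_{j\ge 0}\binom{k+j}{k}\delta^{k}(1-\delta)^{j}P(Y_j=0).
\]

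The main technical step is to show $\Psi(k)\ge c\,k^{-1/\beta}/\ell(k)$ for all large even $k$, where $\ell$ is the slowly varying function with $A_n=n^{1/\beta}\ell(n)$. The de Moivre--Laplace local CLT for the binomial distribution gives $\binom{k+j}{k}\delta^{k}(1-\delta)^{j}\ge c_1/\sqrt{k}$ uniformly for admissible $j$ (of the correct parity so the weight is positive) with $|j-k(1-\delta)/\delta|\le \sqrt{k}$. Gnedenko's local limit theorem, valid in both the stable case $\beta\in(1,2)$ and the case $\beta=2$, gives $P(Y_j=0)\ge c_2/A_j$ on the sub-lattice supporting $\nu^{*j}$ for $j$ large. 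Since the window contains $\asymp\sqrt{k}$ admissible $j$'s and $A_j\asymp k^{1/\beta}\ell(k)$ throughout, summation over this window produces the claimed lower bound on $\Psi(k)$.

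To conclude, since $P_\omega(X_k=0)=0$ for odd $k$,
\[
\sum_{n\ge 0}P_\omega(M_n=(0,0))\ \ge\ c_3\sum_{m\ \text{large}}P_\omega(X_{2m}=0)\,\frac{1}{(2m)^{1/\beta}\ell(2m)}.
\]
As $\beta>1$, fix any $\alpha\in(1/\beta,1)$; because $\ell$ is slowly varying, $(2m)^{-1/\beta}\ell(2m)^{-1}\ge (2m)^{-\alpha}$ for $m$ large, and Theorem \ref{I-Rthm1} makes the right-hand side infinite for $\p$-a.e.\ $\omega$. Since $(M_n)$ is, given $\omega$, a time-homogeneous Markov chain, $\sum_n P_\omega(M_n=(0,0))=\infty$ is equivalent to recurrence of $(M_n)$ at the origin, finishing the proof. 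The delicate point I anticipate is writing down Gnedenko's local limit theorem in the uniform form needed, carefully handling the span of $\nu$ and the parity constraint from Sinai's walk so that the window of size $\sqrt{k}$ indeed contains $\asymp\sqrt{k}$ admissible indices; the combinatorial binomial estimate and the summation manipulations are then routine.
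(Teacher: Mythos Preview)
Your argument is correct and, like the paper, ultimately reduces the recurrence of $(M_n)$ to Theorem \ref{I-Rthm1} together with a local limit theorem for the vertical component. The route, however, is genuinely different.

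The paper does not condition on the full Bernoulli clock. Instead it subsamples $(M_n)$ at the successive times $\tau_k$ of horizontal moves, setting $(X_k,Y_k):=M_{\tau_k}$. Then $(X_k)$ is exactly Sinai's walk, and $(Y_k)$ is a random walk with i.i.d.\ increments $\tilde Z_\ell=\sum_{k=\tau_{\ell-1}-\ell+2}^{\tau_\ell-\ell}Z_k$ (the vertical displacement accumulated between two consecutive horizontal steps). Since $(\tau_k-\tau_{k-1})_k$ are i.i.d.\ geometric and independent of the $Z_k$'s, a law-of-large-numbers argument gives $Y_n/A_n\Rightarrow (\E[\tau_1-1])^{1/\beta}U$, so the new increment law is again in the domain of attraction of a $\beta$-stable. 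One then has the clean factorisation $P_\omega(X_{2n}=0,Y_{2n}=0)=P_\omega(X_{2n}=0)\cdot P(Y_{2n}=0)$ with $P(Y_{2n}=0)\sim C/A_n$, and Theorem \ref{I-Rthm1} (with $\alpha\in(1/\beta,1)$) finishes the job. Recurrence of $(X_n,Y_n)$ immediately yields recurrence of $(M_n)$.

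Your approach keeps the original time index, writes $M_n=(X_{N_n},Y_{n-N_n})$, and after Fubini must control the negative-binomial-type weight $\Psi(k)=\sum_j\binom{k+j}{k}\delta^k(1-\delta)^jP(Y_j=0)$ via de Moivre--Laplace plus Gnedenko. This is perfectly valid; the window argument gives $\Psi(k)\gtrsim A_k^{-1}$ as you claim (note the binomial weight itself has no parity constraint---the only lattice issue is in $P(Y_j=0)$, which you correctly flag). The trade-off is that the paper's subsampling trick replaces this explicit combinatorial estimate by a single appeal to the local limit theorem for a new random walk, yielding a shorter and structurally cleaner proof; your argument, on the other hand, stays closer to the original chain and would adapt more readily to situations where the horizontal-move times do not produce i.i.d.\ vertical increments.
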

Let us recall that $\nu^{*n}(A_n\cdot)$ is the distribution of $(Z_1+\dots+Z_n)/A_n$ 
if $Z_1,...,Z_n$ are iid random variables with distribution $\nu$.

\begin{pfof}{Proposition \ref{I-cor1.5.7}}
Let us write $M_n=(\tilde X_n,\tilde Y_n)$.
We look at the process $(M_{n})_{n \in \N_0}$ whenever it has moved in the first component. For this, we define inductively
$\tau_0:=0$ and $\tau_k:= \inf\left\{n > \tau_{k-1}:\, \tilde X_n \neq \tilde X_{\tau_{k-1}} \right\}$ for $k \ge 1$.
Additionally, we define
${X}_n:= \tilde X_{\tau_n}$ and ${Y}_n:=\tilde Y_{\tau_n}$
for $n \in \N_0$. 
Note that $({X}_n)_{n \in \N_0}$ is a usual RWRE on $\Z$ 
with environment $\omega$. Further, we have
$${Y}_n=\sum_{k=1}^{\tau_n-n}Z_k
=\sum_{\ell=1}^n\sum_{k=\tau_{\ell-1}-\ell+2}^{\tau_\ell-\ell}Z_k,$$
where $(Z_k)_k$ is a sequence of i.i.d. random variables with
distribution $\nu$.
We know that the random variables $(\tilde Z_\ell:=\sum_{k=\tau_{\ell-1}-\ell+2}^{\tau_\ell-\ell}Z_k)_\ell$ are identically distributed and centered. Let us prove
that their distribution belongs to the domain of attraction 
of a $\beta$-stable distribution. 
We know that $(\sum_{k=1}^mZ_k/A_m)_m$
converges in distribution to a $\beta$-stable centered random variable $U$ and that 
$A_m=m^{\frac 1\beta}L(m)$, $L$ being
a slowly varying function.
Observe that $(Y_n/A_{\tau_n-n})_n$ converges in distribution to $U$ and that
$(A_{\tau_n-n}/A_n)_n$ converges almost surely to ${\mathbb E[\tau_1-1]}^{\frac 1\beta}$.
Hence $(Y_n/A_n)_n$ converges
in distribution to ${\mathbb E[\tau_1-1]}^{\frac 1\beta}U$.
Therefore, (since $\p(\tilde Z_1=0)>0$) we conclude that $P_\omega(Y_{n}=0)\sim C(A_n)^{-1}$.
Hence, for $\p$-a.e.\ environment $\omega$, we have
$$
\sum_{n \in \N} P_{\omega}\big(({X}_{2n},{Y}_{2n})=(0,0)\big) = \sum_{n \in \N}
 P_{\omega}({X}_{2n}=0) \cdot P_{\omega}({Y}_{2n}=0) =\infty$$
(due to Theorem \ref{I-Rthm1} applied with $\frac 1\beta<\alpha<1$).
This implies the recurrence of $({X}_{n},{Y}_{n})_{n}$ and so of $(M_n)_{n}$.
\renewcommand{\qedsymbol}{$\square$}
\end{pfof}

Finally we consider the case (III).
We suppose now that every vertical line is oriented upward if
the line is labelled by an even number and downward otherwise.
We consider again a process moving horizontally with
probability $\delta$ and moving vertically with probability
$1-\delta$. We assume that the horizontal displacements
follow a Sinai walk (as in the previous example)
but that the vertical displacements follow the orientation of the vertical line on which the walker
is located. 
For a probability measure $\nu$, we write $\tilde\nu:=\nu(-\cdot)*\nu$ for the distribution of
$Z_2-Z_1$ if $Z_1$ and $Z_2$ are independent with distribution $\nu$.
\begin{prop}[Case (III), odd-even orientations of vertical lines] \label{I-cor***}
Let $\delta\in(0,1)$ and
let $\omega$ be a random environment which fulfils \eqref{I-ass1} and \eqref{I-ass2}. Given $\omega$, $(M_n)_{n \in \N_0}$ is a Markov chain with values in $\Z^2$ such that
\begin{align*}
&P_{\omega}\big(M_0=(0,0)\big) = 1 \vphantom{\frac{1 - \delta}{2}},\\
&P_{\omega}\big(M_{n+1}=(x+1,y)\big|M_{n}=(x,y)\big)=\delta \cdot \omega_x \vphantom{\frac{1 - \delta}{2}},\\
&P_{\omega}\big(M_{n+1}=(x-1,y)\big|M_{n}=(x,y)\big)=\delta \cdot (1-\omega_x) \vphantom{\frac{1 - \delta}{2}},\\
&P_{\omega}\big(M_{n+1}=(x,y+z)\big|M_{n}=(x,y)\big)=(1 - \delta)\cdot\nu((-1)^xz),
\end{align*}
with $\nu$ a probability distribution on $\mathbb Z$  such that
$(\tilde\nu^{*n}(A_n\cdot))_n$ converges to
a $\beta$-stable distribution with $\beta>1$ (for a suitable increasing sequence $(A_n)_n$ of positive 
real numbers).
Then, $(M_n)_{n \in \N_0}$ is recurrent for $\p$-a.e.\ environment $\omega$.
\end{prop}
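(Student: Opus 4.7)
I follow the strategy of the proof of Proposition \ref{I-cor1.5.7}, with the additional ingredient that the first-coordinate dynamics have nearest-neighbour ($\pm 1$) increments. Write $M_n = (\tilde X_n, \tilde Y_n)$, and let $\tau_0 := 0$, $\tau_\ell := \inf\{n > \tau_{\ell-1} : \tilde X_n \neq \tilde X_{\tau_{\ell-1}}\}$ be the times of the successive horizontal steps. Set $X_\ell := \tilde X_{\tau_\ell}$ and $Y_\ell := \tilde Y_{\tau_\ell}$. Then $(X_\ell)_\ell$ is a Sinai walk in the environment $\omega$. Writing $\tilde Z_\ell := Y_\ell - Y_{\ell-1}$ for the vertical displacement accumulated during the $\ell$-th inter-horizontal interval, and noting that the walker sits at horizontal position $X_{\ell-1}$ throughout that interval, one gets $\tilde Z_\ell = (-1)^{X_{\ell-1}} R_\ell$ with $R_\ell = \sum_{i=1}^{G_\ell} Z_i^{(\ell)}$, where $(G_\ell)$ is an iid sequence of geometric variables of parameter $\delta$, $(Z_i^{(\ell)})$ are iid of law $\nu$, and all of these are independent of $\omega$ and of $(X_\ell)$.

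The key observation specific to Case (III) is that since $X_0 = 0$ and $X_\ell - X_{\ell-1} \in \{\pm 1\}$, the parity of $X_{\ell-1}$ is deterministically $(\ell-1)\bmod 2$, so
$$(-1)^{X_{\ell-1}} = (-1)^{\ell-1}, \qquad \text{whence} \qquad Y_{2n} \;=\; \sum_{m=1}^{2n}(-1)^{m-1}R_m \;=\; \sum_{j=1}^{n}(R_{2j-1}-R_{2j}) \;=:\; \sum_{j=1}^n W_j.$$
The $(W_j)$ are iid, symmetric, centred, and completely independent of $(X_\ell)$ and of $\omega$. The oriented structure, which a priori couples the vertical jumps to the horizontal path, thus reduces cleanly to an unoriented random walk whose step is the difference of two independent geometric sums of $\nu$'s. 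Using $\phi_{R_\ell}(t) = \delta/(1-(1-\delta)\phi_\nu(t))$ and the existence of a first moment for $\nu$, a routine expansion yields
$$1 - \phi_{W_j}(t) \;=\; 1 - |\phi_{R_\ell}(t)|^2 \;\sim\; \frac{1-\delta}{\delta}\, \bigl(1-\phi_{\tilde\nu}(t)\bigr) \qquad \text{as } t \to 0,$$
so $W_j$ lies in the domain of attraction of the same (symmetric) $\beta$-stable law as $\tilde\nu$, with a normalising sequence $A'_n$ asymptotically proportional to $A_n$. A local limit theorem for the partial sums of $(W_j)$ on the appropriate sub-lattice then gives $P(Y_{2n}=0) \ge c/A_n$ along a full-density subsequence.

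To conclude, by the independence of $(X_\ell)$ and $(Y_\ell)$ (given $\omega$, and in fact unconditionally since $(Y_\ell)$ does not depend on $\omega$) and Theorem \ref{I-Rthm1} applied with any $\alpha \in (1/\beta, 1)$, we get for $\p$-a.e.\ $\omega$
$$\sum_{n \in \N} P_\omega\bigl(M_{\tau_{2n}} = (0,0)\bigr) \;=\; \sum_{n \in \N} P_\omega(X_{2n}=0)\, P(Y_{2n}=0) \;\ge\; c \sum_{n \in \N}\frac{P_\omega(X_{2n}=0)}{A_n} \;=\; \infty,$$
so that $(X_\ell, Y_\ell)_\ell$ (and therefore $(M_n)_n$) is recurrent, by the standard criterion for countable-state Markov chains. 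The main technical obstacle is the regular-variation analysis of $1-\phi_{W_j}$ combined with the local limit theorem (including some aperiodicity bookkeeping); the probabilistic heart of the argument is the deterministic parity identity $(-1)^{X_{\ell-1}} = (-1)^{\ell-1}$, which is precisely what allows Case (III) to be reduced to a Case-(II)-type situation with $\nu$ replaced by $\tilde\nu$.
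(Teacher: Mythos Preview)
Your argument is correct and follows the same skeleton as the paper's proof: sample at the horizontal step times, observe that $(X_\ell)_\ell$ is the Sinai walk, show $(Y_{2n})_n$ is an iid-increment walk in the domain of attraction of a $\beta$-stable law, and conclude via the local limit theorem together with Theorem~\ref{I-Rthm1}. In particular, the parity identity $(-1)^{X_{\ell-1}}=(-1)^{\ell-1}$ (which you state explicitly) is exactly what the paper uses implicitly when it writes $\tau_n^{+}=\sum_\ell T_{2\ell-1}$, $\tau_n^{-}=\sum_\ell T_{2\ell}$ and calls these the times spent on even/odd vertical lines.

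The genuine methodological difference lies in how one shows that the increment law of $Y_{2n}$ is in the $\beta$-stable domain of attraction. The paper argues probabilistically, decomposing $Y_{2n}=U_n+V_n^{+}-V_n^{-}+W_n$: here $U_n$ is a sum of $n\,\mathbb E[\tau_1-1]$ iid $\tilde\nu$-terms, $V_n^{\pm}$ are centred fluctuations coming from $\tau_n^{\pm}-n\mathbb E[\tau_1]$ (these are $o_P(A_n)$), and $W_n=(\tau_n^{+}-\tau_n^{-})\mathbb E[Z_1]$ contributes an extra Gaussian piece when $\mathbb E[Z_1]\ne 0$. This forces a small case split ($\beta<2$ versus $\beta=2$). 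Your route via the characteristic function $\phi_{W_j}(t)=\bigl|\delta/(1-(1-\delta)\phi_\nu(t))\bigr|^2$ is more direct and avoids the decomposition; however, your stated asymptotic $1-\phi_{W_j}(t)\sim\frac{1-\delta}{\delta}(1-\phi_{\tilde\nu}(t))$ implicitly drops the term $(1-\delta)\,|1-\phi_\nu(t)|^2/\delta^2$ in the numerator, which is indeed $o(1-\phi_{\tilde\nu}(t))$ when $\beta<2$ or $\mathbb E[Z_1]=0$, but is of the same order when $\beta=2$, $\tilde\nu$ has finite variance, and $\mathbb E[Z_1]\ne 0$. In that edge case the constant in your asymptotic is off, yet $W_j$ still lies in the normal domain of attraction with normalisation $\asymp A_n$, so the conclusion is unaffected; this is precisely the case the paper isolates with its $W_n$ term. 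Both approaches land in the same place; yours is cleaner when $\beta<2$, while the paper's decomposition makes the $\beta=2$ correction more transparent.
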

\begin{pfof}{Proposition \ref{I-cor***}}
The proof follows the same scheme as the previous one and uses the same notations 
$(\tilde{X}_n,\tilde Y_n)_n$, $\tau_n$ and $(X_n,Y_n)$.
Again $({X}_n)_{n \in \N_0}$ is a RWRE on $\Z$ with environment $\omega$.
Let us write $T_n:=\tau_n-\tau_{n-1}$, $\tau_0^+=\tau_0^-=0$, $\tau_n^{+}:=
\sum_{\ell=1}^nT_{2\ell-1}$ and $\tau_n^{-}:=
\sum_{\ell=1}^nT_{2\ell}$. Observe that $\tau_n^+$ (resp. $\tau_n^-$) is the number
of vertical moves on an even (resp. odd) vertical axis before the $2n$-th horizontal
displacement.
We have
$${Y}_{2n}=\sum_{\ell=1}^{n}[\xi_{2\ell-1}-\xi_{2\ell}],
\ \ \mbox{with}\ \ \xi_{2\ell-1}=\sum_{k=\tau^+_{\ell-1}-\ell+2}^{\tau^+_\ell-\ell}Z_{2k-1},\ \ \xi_{2\ell}=\sum_{k=\tau^-_{\ell-1}-\ell+2}^{\tau^+_\ell-\ell}Z_{2k},$$
where $(Z_k)_k$ is a sequence of i.i.d. random variables with
distribution $\nu$.
With these notations $Z_{2k+1}$ (resp. $-Z_{2k}$)  is the $k$-th vertical
displacement on an even (resp. odd) vertical axis.

The random variables $\xi_{2\ell-1}-\xi_{2\ell}$ are
iid. We already know that $\xi_{1}-\xi_{2}$ is centered. Let us prove that
its distribution belongs to the domain of attraction of a $\beta$-stable
centered distribution, i.e. that $ Y_{2n}$
suitably normalized converges to a $\beta$-stable random variable.
We observe that
$$ Y_{2n}=\sum_{k=1}^{\tau_n^+-n}Z_{2k-1}-\sum_{k=1}^{\tau_n^--n}Z_{2k}=U_n+V_n^+-V_n^-+W_n,$$
with
$$U_n:=\sum_{k=1}^{n\mathbb E[\tau_1-1]}(Z_{2k-1}-Z_{2k}), $$
$$V_n^+:=\sum_{k=1}^{\tau_n^+-n}(Z_{2k-1}-\mathbb E[Z_1])- \sum_{k=1}^{n\mathbb E[\tau_1-1]}(Z_{2k-1}-\mathbb E[Z_1])  ,$$
$$ V_n^-:=\sum_{k=1}^{\tau_n^--n}(Z_{2k}-\mathbb E[Z_1])- \sum_{k=1}^{n\mathbb E[\tau_1-1]}(Z_{2k}-\mathbb E[Z_1])  ,$$
$$W_n:=(\tau_n^+-\tau_n^-)\mathbb E[Z_1].$$
We know that $(U_n/A_n)_n$ converges in distribution
to a $\beta$-stable random variable $U$.
We observe that
$(V_n^\pm/A_n)_n$ 
converges in probability to 0 (since $A_{|\tau_n^+-n\mathbb E[\tau_1]|}\ll A_n$). 

If $\mathbb E[Z_1]=0$, we conclude that $(Y_n/A_n)_n$ converges in distribution to $U$. 

Assume now that $\mathbb E[Z_1]\ne 0$. Then 
$ \left(W_n/\sqrt{n}\right)_n$
is independent of $(U_n)_n$ and converges
in distribution to some centered normal variable $W$ (assumed to be independent of $U$).

Hence, if $1<\beta<2$, we conclude that $(Y_{2n}/A_n)_n$
converges in distribution to $U$.

If $\beta=2$ and $\mathbb E[Z_1]\ne 0$, 
we can choose $A_n$ such that $U$ and $W$ have the same distribution and
we conclude that  $(Y_{2n}/\sqrt{n+A_n^2})_n$
converges in distribution to a $U$.

Hence, for $\p$-a.e.\ environment $\omega$, we have
$$
\sum_{n \in \N} P_{\omega}\big(({X}_{2n},{Y}_{2n})=(0,0)\big) = \sum_{n \in \N} P_{\omega}({X}_{2n}=0) 
\cdot P_{\omega}({Y}_{2n}=0) =\infty,$$
due to Theorem \ref{I-Rthm1} applied with $\alpha>1/\beta$
and due to the local limit theorem for
$({Y}_{2n})_n$. This implies the recurrence of $({X}_{n},{Y}_{n})_{n}$ and so of $(M_{n})_{n}$, for
$\p$-a.e.\ environment $\omega$.
\renewcommand{\qedsymbol}{$\square$}
\end{pfof}
\begin{prop}
If we replace Sinai's walk by the simple symmetric random walk on 
$\mathbb Z$ (i.e. if we replace $\omega_x$ by $1/2$) in the
assumptions of Propositions  \ref{I-cor1.5.7} and \ref{I-cor***},
then the walk $(M_n)_n$ is recurrent if and only if $\sum_n\frac{1}{A_n\sqrt{n}}=\infty$. 

In particular it is transient as soon as $\beta<2$.
\end{prop}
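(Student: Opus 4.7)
The plan is to show, in both cases (II) and (III) with $\omega_x$ replaced by $1/2$, that $P(M_{2n}=(0,0)) \asymp 1/(\sqrt{n}\,A_n)$; the equivalence of recurrence with $\sum_n 1/(A_n\sqrt{n})=\infty$ then follows from the standard criterion that an irreducible translation-invariant Markov chain on $\Z^2$ is recurrent if and only if $\sum_n P_0(M_n=0)=\infty$. In case (II), $(M_n)_n$ is a random walk on $\Z^2$ with iid increments and one invokes Chung--Fuchs; in case (III) the increments depend only on the parity of the current horizontal coordinate, so one can pass to the parity-enlarged chain on $\Z^2\times\{0,1\}$ or to the walk observed at even times, and apply the usual Markov-chain recurrence criterion.

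The key asymptotic is obtained by conditioning on the sequence $\mathcal{H}=(\epsilon_k)_{k\le 2n}$ of ``horizontal/vertical'' indicators. Let $N_H$ be the number of horizontal moves among the first $2n$ steps and $N_V=2n-N_H$ the number of vertical ones. Conditionally on $\mathcal{H}$, the horizontal step directions $(R_k)$ and the vertical magnitudes $(Z_k)$ are independent, so $X_{2n}\perp Y_{2n}$ given $\mathcal{H}$. The coordinate $X_{2n}$ is then an $N_H$-step simple symmetric random walk, and the classical local limit theorem gives $P(X_{2n}=0\mid\mathcal{H})\sim c/\sqrt{N_H}$ on $\{N_H\text{ even}\}$; since $N_H\approx 2\delta n$ by concentration, the integrated quantity is of order $1/\sqrt{n}$. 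For $Y_{2n}$: in case (II) it is simply a sum of $N_V$ iid $\nu$-variables; in case (III), the pairing argument used in the proof of Proposition \ref{I-cor***} writes $Y_{2n}$ as a sum of roughly $n$ iid $\tilde\nu$-distributed differences (up to boundary terms). In both cases the stable local limit theorem of Gnedenko--Kolmogorov, applicable because $\nu$ (respectively $\tilde\nu$) lies in the domain of attraction of a $\beta$-stable law with $\beta>1$, delivers $P(Y_{2n}=0\mid\mathcal{H})\sim c'/A_n$ for typical $\mathcal{H}$.

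Multiplying these two factors and integrating over $\mathcal{H}$ yields $P(M_{2n}=0)\asymp 1/(\sqrt{n}\,A_n)$; summing in $n$ produces the recurrence--transience dichotomy announced in the proposition. For the final transience assertion, recall that $A_n=n^{1/\beta}L(n)$ with $L$ slowly varying. Thus $1/(A_n\sqrt{n})=1/(n^{1/2+1/\beta}L(n))$; since $1/2+1/\beta>1$ when $\beta<2$, this series converges (slowly varying factors cannot alter the comparison), so $(M_n)_n$ is transient.

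The main technical obstacle I anticipate is the local limit theorem for $Y_{2n}$ in case (III): because the signs in the pairing are governed by the parities visited by the horizontal coordinate, one needs the stable LLT to hold with enough uniformity in $\mathcal{H}$ to survive the integration. Concretely, a pointwise bound $P(Y_{2n}=0\mid\mathcal{H})\gtrsim 1/A_n$ valid for a set of $\mathcal{H}$ of asymptotic probability one, matched by a corresponding upper bound, should suffice. A minor additional point is to correctly identify the period of $(M_n)_n$ so that the unqualified sum $\sum_n P_0(M_n=0)$ in the recurrence criterion is the correct quantity to estimate.
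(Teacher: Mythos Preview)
Your approach is correct and differs from the paper's in an instructive way. You condition on the horizontal/vertical indicator sequence $\mathcal H$ and exploit the conditional independence $\tilde X_{2n}\perp\tilde Y_{2n}$ given $\mathcal H$ (which is indeed valid in case~(III) too, since the parity of the horizontal coordinate is a function of $\mathcal H$ alone). The paper instead passes to the \emph{embedded} chain $(X_n,Y_n)$ observed at horizontal move times $\tau_n$, where $X_n$ and $Y_n$ are \emph{unconditionally} independent and the local limit theorem for $Y_{2n}$ has already been established in the proofs of Propositions~\ref{I-cor1.5.7} and~\ref{I-cor***}. The price the paper pays is that returns of $(M_n)$ to $0$ do not coincide with returns of $(X_{2n},Y_{2n})$ to $0$: between $\tau_{2n}$ and $\tau_{2n+1}$ there are further vertical moves. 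The paper handles this with a short bridging estimate,
\[
\mathbb P(Y_{2n}=0)\;\le\;\mathbb P\Big(\exists K\in\{0,\dots,\tau-1\}:\ Y_{2n}+\textstyle\sum_{k=1}^K Z_k=0\Big)\;\le\;\mathbb P\big(|Y_{2n}|\le\textstyle\sum_{k=1}^{\tau-1}|Z_k|\big)\;\le\;\frac{C}{A_n}\,\mathbb E[\tau]\,\mathbb E[|Z_1|],
\]
so both sides are $\asymp 1/A_n$, and then multiplies by $P(X_{2n}=0)\sim c/\sqrt n$.

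What each route buys: the paper's argument is economical because it recycles the (unconditional) LLT for $Y_{2n}$ already proved, and the bridging step needs only the \emph{uniform} bound $\sup_m\nu^{*n}(m)\le C/A_n$ together with $\mathbb E[\tau],\,\mathbb E[|Z_1|]<\infty$. Your route avoids the bridging entirely but requires, in case~(III), a conditional LLT for $\tilde Y_{2n}=\sum_{+}Z_k-\sum_{-}Z_k$ that is uniform enough in $\mathcal H$ to integrate; this is exactly the issue you flag, and it is real when $\beta=2$ and $\mathbb E[Z_1]\neq 0$, since then the imbalance $(N_+-N_-)\,\mathbb E[Z_1]$ is of the same order $\sqrt n$ as the scale $A_n$ (this is the $W_n$ term in the proof of Proposition~\ref{I-cor***}). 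It can be done, but the paper's embedding-plus-bridging sidesteps this neatly. Your remark on the recurrence criterion is fine: $(M_n)$ is an irreducible Markov chain (translation-invariant under $2\Z\times\Z$ in case~(III)), so recurrence is equivalent to divergence of $\sum_n P_0(M_n=0)$.
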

\begin{proof}
We follow the proofs of Propositions  \ref{I-cor1.5.7} and \ref{I-cor***} and we use the fact
that $P_\omega({X}_{2n}=0)$ is equivalent to $c/\sqrt{n}$ for some $c>0$
as $n$ goes to infinity.
We have
$$\sum_n P_\omega(M_n=0)=\sum_nP_\omega( X_{2n}=0)\mathbb P 
      \left(\exists K\in\{0,...,\tau-1\},\ 
        Y_{2n}+\sum_{k=1}^K Z_k=0\right),$$
where $\tau$ has the same distribution as $\tau_1$ and where
$(Z_k)_k$ is a sequence of iid random variables with distribution $\nu$
such that $ Y_{2n}$, $\tau$ and $(Z_k)_k$ are independent.
Now observe that 
$$\mathbb P 
      \left(\exists K\in\{0,...,\tau-1\},\ 
       Y_{2n}+\sum_{k=1}^K Z_k=0\right)\ge \mathbb P 
      ( Y_{2n}=0)\sim \frac{C_1}{A_n}$$
for some $C_1>0$ due to the local limit theorem for $(Y_{2n})_n$ and that
$$\mathbb P 
      \left(\exists K\in\{0,...,\tau-1\},\ 
        Y_{2n}+\sum_{k=1}^K Z_k=0\right)\le
\mathbb P\left(| Y_{2n}|\le\sum_{k=1}^{\tau-1}|Z_k|\right)$$
\begin{eqnarray*}
&\le& \sum_{m\ge 0}\mathbb P(| Y_{2n}|=m)\mathbb P
\left(\sum_{k=1}^{\tau-1}|Z_k|\ge m\right)\\
&\le & \frac {C_2}{A_n}\mathbb E\left[\sum_{k=1}^{\tau-1}|Z_k|\right]
\le  \frac {C_2}{A_n}\mathbb E[\tau]\mathbb E[|Z_1|],
\end{eqnarray*}
for $C_2>0$ using the uniform bound given by the local limit theorem.
Hence 
$$ \mathbb P 
      \left(\exists K\in\{0,...,\tau-1\},\ 
        Y_{2n}+\sum_{k=1}^K Z_k=0\right)\approx \frac 1 {A_n}.$$
\end{proof}
{\bf Acknowledgements} NG and MK thank Daniel Boivin for the invitations to  Brest, the ANR project MEMEMO2 (ANR-10-BLAN-0125) for supporting these visits and the department of mathematics, Universit\'e de Brest, for its hospitality. FP acknowledges support of the ANR project MEMEMO2 (ANR-10-BLAN-0125).

\bibliographystyle{alpha}

\end{document}